\documentclass[11pt]{amsart}
\usepackage{amssymb,latexsym,epsfig,color}
 
\newtheorem{theorem}{Theorem}[section]

\newtheorem{corollary}[theorem]{Corollary}

\newtheorem{lemma}[theorem]{Lemma}

\theoremstyle{definition}
\newtheorem{definition}[theorem]{Definition}
\newtheorem{remark}[theorem]{Remark}
\newtheorem{example}[theorem]{Example}

\def\NN{\ensuremath{\mathbb{N}}}

\def\RR{\ensuremath{\mathbb{R}}}
 \def\oR{\ensuremath{\mathbb{R}}}

\newcommand{\CC}{{\mathbb C}}

\newcommand{\B}{{\mathcal B}}

\renewcommand{\TH}{{\textup{TH}}}
\newcommand{\OO}{{\mathcal O}}

\newcommand{\PPP}{{\mathcal P}}
\newcommand{\SSS}{{\mathcal S}}
\def\T{\ensuremath{\mathcal{T}}}

\def\H{\ensuremath{\mathcal{H}}}
\def\I{\ensuremath{\mathcal{I}}}
\def\F{\ensuremath{\mathcal{F}}}

\def\C{\ensuremath{\mathcal{C}}}
\def\D{\ensuremath{\mathcal{D}}}

 \def\M{\ensuremath{\mathcal{M}}}

\def\one{\ensuremath{{\bf{1}}}}

\def\uu{\ensuremath{{\bf{u}}}}

\def\ww{\ensuremath{{\bf{w}}}}
\def\xx{\ensuremath{{\bf{x}}}}
\def\yy{\ensuremath{{\bf{y}}}}

\def\GF{\ensuremath{\textup{GF}}}

\def\CUT{\ensuremath{\textup{CUT}}}
 \def\MET{\ensuremath{\textup{MET}}}

\def\CYC{\ensuremath{\textup{CYC}}}

\def\span{\ensuremath{\textup{span}}}

\def\max{\ensuremath{\textup{max}}}
\def\min{\ensuremath{\textup{min}}}

\def\conv{\ensuremath{\textup{conv}}}
\def\deg{\ensuremath{\textup{deg}}}

\newcommand{\ignore}[1]{}

\title[SDP hierarchy for cycles in binary matroids and cuts in
graphs]{A new semidefinite programming hierarchy for cycles in binary  
  matroids\\and cuts in graphs}

\author[J. Gouveia]{Jo{\~a}o Gouveia}
\address{Department of Mathematics, University of Washington, Box
  354350, Seattle, WA 98195, USA, and CMUC, Department of Mathematics,
  University of Coimbra, 3001-454 Coimbra, Portugal}
\email{jgouveia@math.washington.edu} 
\author[M. Laurent]{Monique Laurent}
\address{CWI, Science Park 123, 1098 XG Amsterdam, The Netherlands,
 and Department of Econometrics and Operations Research, Tilburg University, Tilburg, The Netherlands}
\email{monique@cwi.nl}
\author[P.A. Parrilo]{Pablo A.\ Parrilo}
\address{Department of Electrical Engineering and Computer Science,
  Laboratory for Information and Decision Systems, Massachusetts
  Institute of Technology, 77 Massachusetts Avenue, Cambridge, MA
  02139-4307, USA} 
\email{parrilo@mit.edu} 
\author[R. Thomas]{Rekha Thomas}
\address{Department of Mathematics, University of Washington, Box
  354350, Seattle, WA 98195, USA} 
\email{thomas@math.washington.edu}
 \thanks{Gouveia, Parrilo and Thomas were partially supported by 
   the NSF Focused 
   Research Group grants DMS-0757371 and DMS-0757207. 
   Gouveia was also partially
   supported by Funda{\c c}{\~ a}o para a Ci{\^ e}ncia e Tecnologia and
   Thomas by the Robert R. and Elaine K. Phelps Endowment at the
   University of Washington.}  
\date{\today}

\begin{document}

\begin{abstract} The theta bodies of a polynomial ideal are a series
  of semidefinite programming relaxations of the convex hull of the
  real variety of the ideal. In this paper we construct the theta
  bodies of the vanishing ideal of cycles in a binary matroid. Applied
  to cuts in graphs, this yields a new hierarchy of semidefinite
  programming relaxations of the cut polytope of the graph. If the
  binary matroid avoids certain minors we can characterize when the
  first theta body in the hierarchy equals the cycle polytope of the
  matroid. Specialized to cuts in graphs, this result solves a
  problem posed by Lov{\'a}sz.
\end{abstract}

\keywords{Theta bodies, binary matroid, cycle ideal, cuts, cut
  polytope, combinatorial moment matrices, semidefinite relaxations,
  $\TH_1$-exact}
\maketitle

\section{Introduction}

A central question in combinatorial optimization is to understand the
polyhedral structure of the convex hull, $\conv(S)$, of a finite set
$S\subseteq \RR^n$.  A typical instance is when $S$ is the set of
incidence vectors of a finite set of objects over which one is
interested to optimize; think for instance of the problem of finding a
shortest tour, a maximum independent set, or a maximum cut in a graph.
As for hard combinatorial optimization problems one cannot hope in
general to be able to find the complete linear description of the
polytope $\conv(S)$, the objective is then to find good and efficient
approximations of this polytope. Such approximations could be
polyhedra, obtained by considering classes of valid linear
inequalities. In recent years more general convex semidefinite
programming (SDP) relaxations have been considered, which sometimes
yield much tighter approximations than those from LP methods. This was
the case for instance for the approximation of stable sets and
coloring in graphs via the theta number introduced by Lov\'asz
\cite{Lo79}, and for the approximation of the max-cut problem by
Goemans and Williamson \cite{GW95}. See e.g. \cite{LR05} for an
overview.  These results spurred intense research activity on
constructing stronger SDP relaxations for combinatorial optimization
problems (cf. \cite{LS91, SA90,Las01b,Parrilo:spr,Lau03a,LR05}).  In
this paper we revisit the hierarchy of SDP relaxations proposed by
Gouveia et al. \cite{GPT} which was inspired by a question of Lov\'asz
\cite{Lovasz}.  To present it we need some definitions.

Let $I\subseteq \RR[\xx]$ be an ideal and $V_\RR(I)=\{\xx\in\RR^n\mid
f(\xx)=0 \ \forall f\in I\}$ be its real variety.  Throughout
$\RR[\xx]$ denotes the ring of multivariate polynomials in $n$
variables $\xx=(x_1,\ldots,x_n)$ over $\RR$ and $\RR[\xx]_d$ its
subspace of polynomials of degree at most $d\in\NN$.  As the convex
hull of $V_\RR(I)$ is completely described by the (linear) polynomials
$f\in \RR[\xx]_1$ that are non-negative on $V_\RR(I)$, relaxations of
$\conv(V_\RR(I))$ can be obtained by considering sufficient conditions
for the non-negativity of linear polynomials on $V_\RR(I)$.

A polynomial $f\in\RR[\xx]$ is said to be a {\em sum of squares} ({\em
  sos}, for short) if $f=\sum_{i=1}^t g_i^2$ for some polynomials
$g_i\in\RR[\xx]$. Moreover, $f$ is said to be {\em sos modulo the
  ideal $I$} if $f = \sum_{i=1}^t g_i^2 + h$ for some polynomials
$g_i\in\RR[\xx]$ and $h\in I$. In addition, if each $g_i$ has degree
at most $k$, then we say that $f$ is {\em $k$-sos modulo $I$}.
Obviously any polynomial which is $k$-sos modulo $I$ is non-negative
over $V_\RR(I)$.  Following \cite{GPT}, for each $k \in \NN$, define
the set
\begin{equation}
\label{THk}
\TH_k(I):=\{\xx\in\RR^n\mid f(\xx)\ge 0 \ \text{ for all } f \in
\RR[\xx]_1  \ k{\text{-sos modulo }} I\},
\end{equation}
called the {\em $k$-th
  theta body} of the ideal $I$. Note that $\TH_k(I)$ is a (convex)
relaxation of $\conv(V_\RR(I))$, with $$\conv(V_\RR(I))\subseteq
\TH_{k+1}(I)\subseteq \TH_k(I).$$ 
The ideal $I$ is said to be {\em $\TH_k$-exact} if the equality
$\overline{\conv(V_\RR(I))} = \TH_k(I)$ holds.  The theta bodies
$\TH_k(I)$ were introduced in \cite{GPT}, inspired by a question of
Lov\'asz \cite[Problem~8.3]{Lovasz} asking to characterize
$\TH_k$-exact ideals, in particular when $k=1$.

This question of Lov\'asz was motivated by the following result about
stable sets in graphs: The stable set ideal of a graph $G=(V,E)$ is
$\TH_1$-exact if and only if the graph $G$ is perfect. Recall that a
subset of $V$ is {\em stable} in $G$ if it contains no edge. The {\em
  stable set ideal} of $G$ is the vanishing ideal of the $0/1$
characteristic vectors of the stable sets in $G$ and is generated by
the binomials $x_i^2-x_i$ ($i\in V$) and $x_ix_j$ ($\{i,j\}\in E$)
(cf. \cite{Lovasz} for details). 

For a graph $G$, let $IG$ be the vanishing ideal of the incidence
vectors of cuts in $G$, and the cut polytope, $\CUT(G)$, be the convex
hull of the incidence vectors of cuts in $G$.  Following Problem~8.3,
Problem~8.4 in \cite{Lovasz} asks for a characterization of
``cut-perfect'' graphs which are precisely those graphs $G$ for which
$IG$ is $\TH_1$-exact. We answer this question
(Corollary~\ref{corcutperfect}) by studying theta bodies in the more
general setting of cycles in binary matroids. As an intermediate step
we derive the theta bodies of $IG$ which give rise to a new hierarchy
of semidefinite programming relaxations of $\CUT(G)$.

\subsection*{Some notation}
Let $E$ be a finite set. For a subset $F\subseteq E$, let $\one^F\in
\{0,1\}^E$ denote its 0/1-incidence vector and $\chi^F\in\{\pm 1\}^E$
its $\pm 1$-incidence vector, defined by $\one^F_e=1$, $\chi^F_e =-1$
if $e\in F$ and $\one^F_e=0$, $\chi^F_e =1$ otherwise.  Throughout
$\RR E:=\RR[x_e \mid e\in E]$ denotes the polynomial ring with
variables indexed by $E$. If $F \subseteq E$, we set $\xx^F :=
\prod_{e \in F} x_e$. For a symmetric matrix $X \in \RR^{n \times n}$,
$X\succeq 0$ means that $X$ is positive semidefinite, or equivalently,
$\uu^TX\uu\ge 0$ for all $\uu\in\RR^n$.

\subsection*{Contents of the paper} 
Section ~\ref{secprelims} contains various preliminaries and some
results of \cite{GPT} needed in this paper.
In Section~\ref{secmatroid} we introduce binary matroids, which
provide the natural setting to present our results for cuts in graphs.
A binary matroid is a pair $\M = (E,\C)$ where $E$ is a finite set and
$\C$ is a collection of subsets of $E$ (the {\em cycles} of $\M$)
closed under taking symmetric differences; for instance, cuts (resp.,
cycles) in a graph form binary matroids.  In Section
\ref{seccycleideals} we present a generating set for the cycle ideal
$I\M$ (i.e. the vanishing ideal of the incidence vectors of the cycles
$C\in\C$)
and a linear basis $\B$ of its quotient space $\RR E/I\M$ (cf. Theorem
\ref{theobaseM}).  Using this, we can explicitly describe the series
of theta bodies $\TH_k(I\M)$ that approximate the cycle polytope
$\CYC(\M)$ (i.e. the convex hull of the incidence vectors of the
cycles in $\C$). In Section~\ref{secapplicut}, we specialize these
results to cuts in a graph $G$
and show that $\B$ can then be indexed by $T$-joins of $G$. This
enables a combinatorial description of the theta bodies $\TH_k(IG)$
that converge to the cut polytope $\CUT(G)$ of
$G$. Section~\ref{seccompare} compares the semidefinite relaxations
$\TH_k(IG)$ to some known semidefinite relaxations of the cut
polytope. In Section~\ref{secapplicircuit} the results from
Section~\ref{seccycleideals} are specialized to cycles in a graph.
Section \ref{seccomplexity} contains a discussion about the complexity of constructing theta bodies.
Section~\ref{secexact} studies the binary matroids $\M$ whose cycle
ideal $I\M$ is $\TH_1$-exact (i.e., $\TH_1(I\M) = \CYC(\M)$).
Theorem~\ref{theochar} characterizes the $\TH_1$-exact cycle ideals
$I\M$ when $\M$ does not have the three special minors $F_7^*$,
$R_{10}$ and $\M_{K_5}^*$. As an application, we obtain
characterizations of $\TH_1$-exact graphic and cographic matroids, and
the latter answers Problem~8.4 in \cite{Lovasz}. The paper contains
several examples of binary matroids for which we exhibit the least $k$
for which $I\M$ is $\TH_k$-exact. In Section~\ref{seccircuit} we do
this computation for an infinite family of graphs; if $C_n$ is the
circuit with $n$ edges, then the smallest $k$ for which $\TH_k(IC_n) =
\CUT(C_n)$ is $k = \lceil n/4 \rceil$.

\section{Preliminaries} 
\label{secprelims}

\subsection{Ideals and combinatorial moment matrices}
Let $\RR[\xx]$ be the polynomial ring over $\RR$ in the variables
$\xx=(x_1,\ldots,x_n)$.  A non-empty subset $I \subseteq \RR[\xx]$ is
an {\em ideal} if
$I$ is closed under addition, and multiplication by elements of
$\RR[\xx]$.  The ideal generated by $\{f_1,\ldots,f_s\}\subseteq
\RR[\xx]$ is the set $I=\{ \sum_{i=1}^{s} h_if_i \,:\, h_i \in
\RR[\xx] \}$, denoted as $I = (f_1, \ldots, f_s)$. For $S\subseteq
\RR^n$, the {\em vanishing ideal} of $S$ is $\I(S):=\{f\in\RR[\xx]\mid
f(\xx)=0 \ \forall \xx\in S\}.$ For $W \subseteq [n]$, $I_W := I\cap
\RR[x_i\mid i\in W]$ is the {\em elimination ideal} of $I$ with
respect to $W$.

An ideal $I\subseteq \RR[\xx]$ is said to be \emph{zero-dimensional}
if its (complex) variety:
$$V_\CC(I):=\{\xx\in \CC^n\mid f(x)=0\ \; \forall f\in I\},$$ is finite,
$I$ is {\em radical} if $f^m\in I$ implies $f\in I$ for any $f\in
\RR[\xx]$, and $I$ is {\em real radical} if
$f^{2m}+\sum_{i=1}^tg_i^2\in I$ implies $f\in I$ for all
$f,g_i\in\RR[\xx]$. By the {\em Real Nullstellensatz}
(cf. \cite{BCR}), $I$ is real radical if and only if $I =
\I(V_{\RR}(I))$. Therefore, $I$ is zero-dimensional and real radical
if and only if $I = \I(S)$ for a finite set $S \subseteq \RR^n$.  If
$I$ is real radical, and $\pi_W$ denotes the projection from
$\RR^{[n]}$ to $\RR^W$, then the elimination ideal $I_W$ is the
vanishing ideal of $\pi_W(V_\RR(I))$, and there is a simple
relationship between the $k$-th theta body of $I$ and that of its
elimination ideal $I_W$:
\begin{equation}
\label{incproj}
\pi_W(\TH_k(I))\subseteq \TH_k(I_W).
\end{equation}

The quotient space $\RR[\xx]/I$ is a $\RR$-vector space whose
elements, called the {\em cosets} of $I$, are denoted as $f+I$
($f\in\RR[\xx]$). For $f,g \in \RR[\xx]$, $f+I = g+I$ if and only if
$f-g \in I$.  The degree of $f+I$ is defined as the smallest possible
degree of $g\in \RR[\xx]$ such that $f-g\in I$.  The vector space
$\RR[\xx]/I$ has finite dimension if and only if $I$ is
zero-dimensional; moreover, $|V_\CC(I)|\le \dim \RR[\xx]/I$, with
equality if and only if $I$ is radical.

\medskip
Gouveia et al. \cite{GPT} give a geometric characterization of
zero-dimensional real radical ideals that are $\TH_1$-exact. 

\begin{definition} \label{def:level}
For $k\in\NN$, a finite
set $S\subseteq \oR^n$ is said to be {\em $k$-level} if $|\{f(\xx)\mid
\xx\in S\}|\le k$ for all $f\in \RR[\xx]_1$ for which the linear
inequality $f(\xx)\ge 0$ induces a facet of the polytope $\conv(S)$.
\end{definition}

\begin{theorem}\cite{GPT}\label{theo2level}
  Let $S\subseteq \oR^n$ be a finite set. The ideal $\I(S)$ is
  $\TH_1$-exact (i.e., $\conv(S)=\TH_1(\I(S))$) if and only if $S$ is
  a $2$-level set.
\end{theorem}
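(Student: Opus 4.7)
The plan is to pass through the intermediate algebraic condition ``every facet-defining linear inequality of $\conv(S)$ is $1$-sos modulo $\I(S)$,'' and show this is equivalent both to $S$ being $2$-level and to $\TH_1$-exactness of $\I(S)$. Since $S$ is finite, $\overline{\conv(S)}=\conv(S)$ and the inclusion $\conv(S)\subseteq \TH_1(\I(S))$ is automatic, so $\TH_1$-exactness amounts exactly to each facet-defining $\ell\ge 0$ of $\conv(S)$ being valid on $\TH_1(\I(S))$. The core algebraic observation is: a linear polynomial $\ell$ takes values only in $\{0,c\}$ on $S$ for some $c>0$ if and only if $\ell^2-c\ell\in \I(S)$, which rewrites as $\ell=(\ell/\sqrt c)^2+h$ with $h\in \I(S)$, i.e.\ $\ell$ is $1$-sos modulo $\I(S)$.

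The direction ``$S$ $2$-level $\Rightarrow \I(S)$ is $\TH_1$-exact'' is then immediate: if $S$ is $2$-level, each facet-defining $\ell\ge 0$ takes exactly the values $\{0,c_\ell\}$ with $c_\ell>0$, hence by the observation is $1$-sos modulo $\I(S)$ and therefore nonnegative on $\TH_1(\I(S))$; intersecting these valid inequalities forces $\TH_1(\I(S))\subseteq \conv(S)$.

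For the converse, I would first show that $\TH_1$-exactness forces every facet-defining $\ell$ to be $1$-sos modulo $\I(S)$. Using the evaluation isomorphism $\RR[\xx]/\I(S)\cong \RR^S$ (which holds since $\I(S)$ is zero-dimensional and real radical), the cone of linear $1$-sos representatives becomes the image of the PSD cone in $\mathrm{Sym}(\RR[\xx]_1/(\I(S)\cap\RR[\xx]_1))$ under the linear map $\sum_i v_iv_i^T\mapsto (\sum_i v_i(s)^2)_{s\in S}$; this map has trivial PSD kernel (any basis of $\RR[\xx]_1/(\I(S)\cap\RR[\xx]_1)$ is linearly independent in $\RR^S$), so the cone is closed, and by convex duality its dual is precisely $\TH_1(\I(S))=\conv(S)$. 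Hence every linear polynomial nonnegative on $\conv(S)$, in particular every facet-defining $\ell$, admits a representation $\ell=\sum_i g_i^2+h$ with $g_i\in \RR[\xx]_1$ and $h\in \I(S)$. For any facet vertex $s_0\in S$ with $\ell(s_0)=0$ we have $\sum_i g_i(s_0)^2=0$, so each $g_i$ vanishes on the affine hull of the facet; as this hull is a hyperplane inside $\mathrm{aff}(S)$, each $g_i=\alpha_i\ell+k_i$ with $k_i\in \I(S)$. Expanding yields $g_i^2\equiv \alpha_i^2\ell^2\pmod{\I(S)}$, hence $\ell\equiv \beta\ell^2\pmod{\I(S)}$ with $\beta=\sum_i\alpha_i^2>0$ (positivity since otherwise $\ell\in \I(S)$, contradicting facet-definingness). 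Evaluating on $S$ gives $\ell(s)(1-\beta\ell(s))=0$, so $\ell(s)\in\{0,1/\beta\}$ for every $s\in S$, proving $2$-levelness. The main obstacle is the closedness step: without it, facet-defining $\ell$ would only lie in the closure of the $1$-sos cone, which is insufficient for the concluding algebraic manipulation; closedness relies crucially on the finite-dimensionality of $\RR[\xx]/\I(S)$ afforded by zero-dimensionality of $\I(S)$.
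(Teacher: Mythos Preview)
The paper does not supply its own proof of this theorem; it is quoted verbatim from \cite{GPT} as a background result, so there is no in-paper argument to compare against.

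Your proposal is correct and follows the same route as the original argument in \cite{GPT}: the forward direction via the identity $\ell\equiv (\ell/\sqrt{c})^2$ modulo $\I(S)$, and the reverse direction by first showing every facet-defining $\ell$ is $1$-sos modulo $\I(S)$, then extracting the two-value conclusion from $\ell\equiv\beta\ell^2$. Two points deserve to be tightened. First, in the duality step you pass from ``$\Sigma_1$ is closed'' directly to ``every linear polynomial nonnegative on $\TH_1(\I(S))=\conv(S)$ lies in $\Sigma_1$''; this biduality requires that the homogenized dual cone $\Sigma_1^*\subseteq\RR^{n+1}$ coincide with the cone over $\{1\}\times\conv(S)$, for which you also need $\Sigma_1^*\cap\{y_0=0\}=\{0\}$. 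That holds because nonnegative constants are $1$-sos (forcing $\Sigma_1^*\subseteq\{y_0\ge 0\}$) and because $\conv(S)$ is compact, but it should be stated. Second, your phrase ``the cone of linear $1$-sos representatives becomes the image of the PSD cone'' is a little imprecise: the image of that PSD cone in $\RR^S$ is the set of \emph{all} $1$-sos evaluation vectors, and the linear ones form its intersection with the image of $\RR[\xx]_1$. This does not affect closedness, since the latter is then a preimage of a closed set under a linear map, but the distinction matters for the statement.
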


\noindent
More generally, Gouveia et al. \cite[Section 4]{GPT} show the implication:
\begin{equation}\label{relklevel}
S \text{ is } (k+1)\text{-level } \Longrightarrow  \;
\I(S) \text{ is } \TH_k\text{-exact};
\end{equation}
the reverse implication however does not hold for $k\ge 2$ (see
e.g.~Remark~\ref{remcircuit} for a counterexample).

\medskip
We now mention an alternative more explicit formulation for the theta
body $\TH_k(I)$ of an ideal $I$ in terms of positive semidefinite
combinatorial moment matrices.  We first recall this class of matrices
(introduced in \cite{Lau07a}) which amounts to using the equations
defining $I$ to reduce the number of variables.
Let $\B=\{b_0+I,b_1+I, \ldots\}$ be a basis of $\RR[\xx]/I$ and, for 
$k\in \NN$, let $\B_k:=\{b+I\in\B\mid \deg(b+I)\le k\}$.
Then any polynomial $f\in \RR[\xx]$ has a unique decomposition $f
=\sum_{l\ge 0} \lambda^{(f)}_l b_l$ modulo $I$; we let
$\lambda^{(f)}=(\lambda^{(f)}_l)_l$ denote the vector of coordinates
of the coset $f+I$ in the basis $\B$ (which has only finitely many
non-zero coordinates).

\begin{definition}
  Let $\yy\in\RR^\B$. The {\em combinatorial moment matrix}
  $M_\B(\yy)$ is the (possibly infinite) matrix indexed by $\B$ whose
  $(i,j)$-th entry is
$$\sum_{l\ge 0} \lambda^{(b_ib_j)}_l y_l.$$
The $k$-th truncated combinatorial moment matrix $M_{\B_k}(\yy)$ is
the principal submatrix of $M_\B(\yy)$ indexed by $\B_k$.
\end{definition}

In other words, the matrix $M_\B(\yy)$ is obtained as follows. The
coordinates $y_l$'s correspond to the elements $b_l+I$ of $\B$; expand
the product $b_ib_j$ in terms of the basis $\B$ as $b_ib_j=\sum_l
\lambda^{(b_ib_j)}_l b_l$ modulo $I$; then the $(b_i,b_j)$-th entry of
$M_\B(\yy)$ is its `linearization': $\sum_l \lambda^{(b_ib_j)}_l y_l$.

To control which entries of $\yy$ are involved in the truncated matrix
$M_{\B_k}(\yy)$, it is useful to suitably choose the basis
$\B$. Namely, we choose $\B$ satisfying the following property:
\begin{equation}\label{propB}
\deg(f+I)\le k\Longrightarrow f+I \in\span(\B_k).
\end{equation}
This is true, for instance, when $\B$ is the set of {\em standard
  monomials} of a {\em term order} that respects degree. (See
\cite[Chapter 2]{CLO} for these notions that come from Gr\"obner basis
theory.) If $\B$ satisfies (\ref{propB}), then the entries of
$M_{\B_k}(\yy)$ depend only on the entries of $\yy$ indexed by
$\B_{2k}$.  Moreover, Gouveia et al.  \cite{GPT} show that $\TH_k(I)$
can then be defined using the matrices $M_{\B_k}(\yy)$, up to closure
and a technical condition on $\B$.  This technical condition, which
states that $\{1+I,x_1+I,\ldots,x_n+I\}$ is linearly independent in
$\RR[\xx]/I$, is however quite mild since if there is a linear
dependency then it can be used to eliminate variables.

\begin{example} 
\label{ex:runningex2} 
Consider the ideal $I = (x_1^2x_2 - 1) \subset \RR[x_1,x_2]$. Note
that $\B = \bigcup_{k \in \NN} \{x_1^k+I, x_2^k+I, x_1x_2^k+I\}$ is a
monomial basis for $\RR[x_1,x_2]/I$ satisfying (\ref{propB}) for
which $$\B_4 =
\{1,x_1,x_2,x_1^2,x_1x_2,x_2^2,x_1^3,x_1x_2^2,x_2^3,x_1^4,x_1x_2^3,x_2^4
\}+I.$$ The combinatorial moment matrix $M_{\B_2}(\yy)$ for
$\yy=(y_0,y_1,\ldots, y_{11}) \in \RR^{\B_4}$ is
  $$ \begin{array}{c||c|c|c|c|c|c|}
    & 1 & x_1 & x_2 & x_1^2 & x_1x_2 & x_2^2 \\
\hline
\hline
  1 & y_0 & y_1 & y_2 & y_3 & y_4 & y_5 \\
\hline
  x_1 & y_1 & y_3 & y_4 & y_6 & 1 & y_7 \\
\hline
  x_2 & y_2 & y_4 & y_5 & 1 & y_7 & y_8 \\
\hline
 x_1^2& y_3 & y_6 & 1 & y_9 & y_1 & y_2 \\
\hline
 x_1x_2 & y_4 & 1 & y_7 & y_1 & y_2 & y_{10}\\
\hline
 x_2^2& y_5 & y_7 & y_8 & y_2 & y_{10} & y_{11}\\
\hline 
  \end{array}.$$
\end{example}

\vspace{.2cm}

\begin{theorem}\cite{GPT} \label{thepTHMB}
  Assume $\B$ satisfies (\ref{propB}) and
  $\B_1=\{1+I,x_1+I,\ldots,x_n+I\}$, and let the coordinates of
  $\yy\in\RR^{\B_{2k}}$ indexed by $\B_1$ be
  $y_0,y_1,\ldots,y_n$. Then $\TH_k(I)$ is equal to the closure of the
  set
\begin{equation}\label{setMB}
  \{(y_1,\ldots,y_n)\mid \yy\in\RR^{\B_{2k}} \text{ with }
  M_{\B_k}(\yy)\succeq 0  \text{ and }  y_0=1\}. 
\end{equation}
When $I=\I(S)$ where $S\subseteq \{0,1\}^n$, the closure is not needed
and $\TH_k(I)$ equals the set (\ref{setMB}).
\end{theorem}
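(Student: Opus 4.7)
The plan is to prove the equivalence by relating $k$-sos-modulo-$I$ linear polynomials with PSD combinatorial moment matrices, via a standard sum-of-squares/moment duality adapted to the basis $\B$. First I would translate the $k$-sos-mod-$I$ condition for a linear polynomial $f$ into a semidefinite description. Writing $g_i+I=\sum_{b\in\B_k}\alpha_{ib}(b+I)$, the equation $f+I=\sum_i(g_i+I)^2$ expands in the basis $\B$ as
$$ f+I \;=\; \sum_l \langle A,\Lambda^{(l)}\rangle (b_l+I), $$
where $A=\sum_i\alpha_i\alpha_i^T\succeq 0$ is indexed by $\B_k$ and $\Lambda^{(l)}$ is the symmetric matrix with entries $\lambda^{(bb')}_l$ for $b,b'\in\B_k$. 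By property (\ref{propB}), only $l$ with $b_l\in\B_{2k}$ contribute. Matching coefficients, and using the linear independence of $\B_1=\{1+I,x_1+I,\ldots,x_n+I\}$ together with $\deg f\leq 1$, shows $f=f_0+\sum_i f_ix_i$ is $k$-sos modulo $I$ if and only if some $A\succeq 0$ satisfies $f_i=\langle A,\Lambda^{(l(i))}\rangle$ for $b_{l(i)}=x_i+I$ (with $x_0:=1$) and $\langle A,\Lambda^{(l)}\rangle=0$ for every other $b_l\in\B_{2k}$.

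The containment of the set in (\ref{setMB}) in $\TH_k(I)$ is then immediate from the identity $M_{\B_k}(\yy)=\sum_l y_l\Lambda^{(l)}$: for any $\yy$ with $y_0=1$, $y_i=x_i$, and $M_{\B_k}(\yy)\succeq 0$, and any $k$-sos mod $I$ linear $f$ with certificate $A$, a direct computation gives $f(\xx)=\sum_{i=0}^n f_iy_i=\sum_{l\in\B_{2k}}y_l\langle A,\Lambda^{(l)}\rangle=\langle A, M_{\B_k}(\yy)\rangle \geq 0$. Since $\TH_k(I)$ is closed (as an intersection of linear half-spaces), the closure is contained as well.

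For the reverse inclusion, the plan is to invoke conic duality. The previous step realizes the cone $\K_1\subseteq\RR^{\B_1}$ of coefficient vectors of $k$-sos mod $I$ linear polynomials as the image, under a linear map, of the intersection of the PSD cone (of matrices indexed by $\B_k$) with a linear subspace. Applying the identity $(C_1\cap C_2)^*=\overline{C_1^*+C_2^\perp}$ for a cone $C_1$ and a subspace $C_2$, and using self-duality of the PSD cone, the dual cone $\K_1^*$ is identified with the closure of $\{\yy^{(1)}\in\RR^{\B_1} : \exists\,\yy\in\RR^{\B_{2k}}\text{ extending }\yy^{(1)}\text{ with }M_{\B_k}(\yy)\succeq 0\}$. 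Since $\xx\in\TH_k(I)$ iff $(1,\xx)\in\K_1^*$, the reverse inclusion up to closure follows. The main obstacle is this closure step, since the projection of a spectrahedron need not be closed, which is why the statement requires closure in general. For the final assertion: when $I=\I(S)$ with $S\subseteq\{0,1\}^n$, the basis $\B$ is finite and every feasible $\yy$ lies in the convex hull of the rank-one moment vectors associated with $s\in S$ (each coordinate $y_b$ bounded by $\max_{s\in S}|b(s)|$), so the feasible set is compact, its projection is closed, and no closure is needed.
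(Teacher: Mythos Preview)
This theorem is not proved in the present paper; it is quoted from \cite{GPT} without argument, so there is no proof here to compare against.  Your sos/moment duality outline for the equality up to closure is the standard one and is essentially correct.  One detail you leave implicit is why the slice of the closed cone $\K_1^*$ at $y_0=1$ coincides with the closure of the slice of the (unclosed) moment cone at $y_0=1$; this follows since the $(b_0,b_0)$-entry of $M_{\B_k}(\yy)$ equals $y_0$, so $M_{\B_k}(\yy)\succeq 0$ forces $y_0\ge 0$ and one can rescale approximating sequences to have $y_0=1$.

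There is, however, a genuine error in your treatment of the $0/1$ case.  The assertion that ``every feasible $\yy$ lies in the convex hull of the rank-one moment vectors associated with $s\in S$'' is false: that is precisely the statement that the level-$k$ relaxation is exact for every finite $S\subseteq\{0,1\}^n$, which fails already for cuts (e.g.\ for $K_5$ at level~$1$, cf.\ Example~\ref{exK5} after the affine change $0/1\leftrightarrow\pm 1$).  What is actually needed, and what does hold, is only that the \emph{matrices} $M_{\B_k}(\yy)$ are bounded on the feasible set.  In the $0/1$ setting one may take $\B$ to consist of squarefree monomials (since $x_i^2-x_i\in I$), so $b^2\equiv b\pmod I$ for every $b\in\B$; hence the diagonal of $M_{\B_k}(\yy)$ is $(y_b)_{b\in\B_k}$, and the $2\times 2$ principal minor on rows $1,b$ yields $y_b(1-y_b)\ge 0$, i.e.\ $y_b\in[0,1]$.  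All entries of the PSD matrix are then bounded, so along any convergent sequence in the projection one may pass to a subsequence with convergent matrices; the limit matrix is PSD, lies in the (closed) image of the linear map $\yy\mapsto M_{\B_k}(\yy)$, and its first row recovers $(1,z_1,\ldots,z_n)$.  This gives closedness of (\ref{setMB}).  Your parenthetical hints at coordinatewise bounds, but these are not a consequence of the convex-hull claim, and the convex-hull claim itself is incorrect.
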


Theorem~\ref{thepTHMB} implies that optimizing a linear objective
function over $\TH_k(I)$ can be reformulated as a semidefinite program
with the constraints $M_{\B_k}(\yy)\succeq 0$ and $y_0=1$ which, for
fixed $k$, can thus be solved in polynomial time (to any precision).

\subsection{Graphs, cuts and cycles}

Let $G=(V,E)$ be a graph.  Throughout, the vertex set is $V=[n]$, the
edge set of the complete graph $K_n$ is denoted by $E_n$, so that $E$
is a subset of $E_n$, and the edges of $E_n$ correspond to pairs
$\{i,j\}$ of distinct vertices $i,j\in V$.  For $F\subseteq E$,
$\deg_F(v)$ denotes the number of edges of $F$ incident to $v\in V$.
A {\em circuit} is a set of edges
$\{\{i_1,i_2\},\{i_2,i_3\},\ldots,\{i_{t-1},i_t\},\{i_t,i_1\}\}$ where
$i_1,\ldots,i_t\in V$ are pairwise distinct vertices.  A set
$C\subseteq E$ is a {\em cycle} (or Eulerian subgraph) if $\deg_C(v)$
is even for all $v\in V$;
every non-empty cycle is an edge-disjoint union of circuits.  For
$S\subseteq V$, the {\em cut} $D$ corresponding to the partition
$(S,V\setminus S)$ of $V$ is the set of edges $\{i,j\}\in E$ with
$|\{i,j\}\cap S|=1$. A basic property is that each cut intersects each
cycle in an even number of edges; this is in fact a property of binary
matroids which is why we will present some of our results later in the
more general setting of binary matroids (cf. Section
\ref{secmatroid}).

Each cut $D$ can be encoded by its $\pm 1$-incidence vector
$\chi^D\in\{\pm 1\}^E$, called the {\em cut vector} of $D$. The {\em
  cut ideal} of $G$, denoted as $IG$, is the vanishing ideal of the
set of cut vectors of $G$.
The {\em cut polytope} of $G$ is 
\begin{equation}\label{cutP}
  \CUT(G):=\conv\{\chi^D\mid D \ \text{ is a cut in } G\} =
  \pi_E(\CUT(K_n)) \subseteq \RR^E, 
\end{equation}
where $\pi_E$ is the projection from $\RR^{E_n}$ onto
$\RR^E$. (Cf. e.g. \cite{DL97} for an overview on the
cut polytope.)  The cuts of $K_n$ can also be encoded by the {\em cut
  matrices} $X:= \xx\xx^T$ for $\xx \in \{\pm 1\}^n$ indexing the 
partitions of $[n]$ corresponding to the cuts. Thus the set
\begin{equation}\label{sdpGW}
\{\yy\in \RR^{E}\mid \exists X\in \RR^{V\times V},
X\succeq 0, \ X_{ii}=1\ (i\in V), \ X_{ij}=y_{\{i,j\}}\ (\{i,j\}\in E)\}
\end{equation}
is a relaxation of the cut polytope $\CUT(G)$, over which one can
optimize any linear objective function in polynomial time (to any
precision), using semidefinite optimization.

Given edge weights $\ww \in \RR^E$, the max-cut problem asks for a cut $D$
in $G$ of maximum total weight $\sum_{e\in D}w_{e}$; thus it can be
formulated as
\begin{equation}\label{maxcut}
 \max \left\{ \frac{1}{2}\sum_{e \in E} w_{e}(1- y_{e}) 
\mid  \yy \in \textup{CUT}(G)\right\},
\end{equation} 
where the variable can alternatively be assumed to lie in $\CUT(K_n)$.
This is a well-known NP-hard problem \cite{GJ79}. Thus one is
interested in finding tight efficient relaxations of the cut polytope,
potentially leading to good approximations for the max-cut problem.
It turns out that the simple semidefinite programming relaxation
(\ref{sdpGW}) has led to the celebrated $0.878$-approximation
algorithm of Goemans and Williamson \cite{GW95} which, as of today,
still gives the best known performance guarantee for max-cut.

\section{Theta bodies for cuts and matroids} \label{secmatroid}

In this section we study in detail the hierarchy of SDP relaxations
for the cut polytope arising from the theta bodies of the cut ideal.
As is well-known, cuts in graphs form a special class of binary
matroids.  It is thus natural to consider the theta bodies in the more
general setting of binary matroids, where the results become more
transparent.  Then we will apply the results to cuts in graphs (the
case of cographic matroids) and also to cycles in graphs (the case of
graphic matroids).

\subsection{The cycle ideal of a binary matroid and its theta bodies} 
\label{seccycleideals}

Let $\M=(E,\C)$ be a binary matroid; that is, $E$ is a finite set and
$\C$ is a collection of subsets of $E$ that is closed under taking
symmetric differences.  Members of $\C$ are called the {\em cycles} of
$\M$, and members of the set
$$\C^*:=\{D\subseteq E :  |D\cap C| \text{ even } \ \forall C\in\C\}$$
are called the {\em cocycles} of $\M$.  Then, $\M^*=(E,\C^*)$ is again
a binary matroid, known as the {\em dual matroid} of $\M$, and
$(\M^*)^*=\M$.  The (inclusion-wise) minimal non-empty cycles
(cocycles) of $\M$ are called the {\em circuits} ({\em cocircuits}) of
$\M$.  An element $e\in E$ is a {\em loop} ({\em coloop}) of $\M$ if
$\{e\}$ is a circuit (cocircuit) of $\M$.  Two distinct elements
$e,f\in E$ are {\em parallel} ({\em coparallel}) if $\{e,f\}$ is a
circuit (cocircuit) of $\M$.  Every non-empty cycle is a disjoint
union of circuits.  Given $C\in \C$, an element $e\in E\setminus C$ is
called a {\em chord} of $C$ if there exist $C_1, C_2\in\C$ such that
$C_1\cap C_2=\{e\}$ and $C=C_1\Delta C_2$ (if $C$ is a circuit then
$C_1,C_2$ are in fact circuits); $C$ is said to be {\em chordless} if
it has no chord.  Here is a property of chords that we will use later.

\begin{lemma}\label{lemchord}
  Let $C$ be a circuit of $\M$, let $e\in E\setminus C$ be a chord of
  $C$ and $C_1,C_2$ be circuits with $C=C_1\Delta C_2$ and $C_1\cap
  C_2=\{e\}$.  Then each $C_i$ has strictly fewer chords than $C$.
\end{lemma}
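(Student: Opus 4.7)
My plan is that the identity map $f\mapsto f$ sends each chord of $C_1$ to a chord of $C$ while avoiding $e$; together with the observation that $e\in C_1$ prevents $e$ from being a chord of $C_1$, this yields $|\text{chords}(C_1)|\le|\text{chords}(C)|-1$. The argument for $C_2$ is symmetric under the substitution $C_1\leftrightarrow C_2$.

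To justify the map, suppose $f$ is a chord of $C_1$, witnessed by circuits $D_1,D_2\in\C$ with $D_1\Delta D_2=C_1$ and $D_1\cap D_2=\{f\}$. The identity
\[
D_1\cup D_2\ =\ (D_1\Delta D_2)\cup(D_1\cap D_2)\ =\ C_1\cup\{f\}
\]
confines each $D_i$ inside $C_1\cup\{f\}$. Intersecting with $C_2$ and using $C_1\cap C_2=\{e\}$ yields
\[
D_i\cap C_2\ \subseteq\ \{e\}\cup(\{f\}\cap C_2),
\]
while distributivity of intersection over symmetric difference gives
\[
(D_1\cap C_2)\Delta(D_2\cap C_2)\ =\ (D_1\Delta D_2)\cap C_2\ =\ \{e\}.
\]

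In the principal case $f\notin C_2$, these two identities force (after possibly relabelling $D_1,D_2$) $D_1\cap C_2=\emptyset$ and $D_2\cap C_2=\{e\}$. Setting $E_1:=D_1$ and $E_2:=D_2\Delta C_2$ then produces two cycles satisfying $E_1\Delta E_2=D_1\Delta D_2\Delta C_2=C$ and
\[
E_1\cap E_2\ =\ (D_1\cap D_2)\Delta(D_1\cap C_2)\ =\ \{f\}.
\]
Since $f$ lies in neither $C_1$ nor $C_2$, it lies outside $C$, and $f\ne e$ as $e\in C_1$ but $f\notin C_1$. This exhibits $f$ as a chord of $C$ distinct from $e$, as required.

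The main obstacle will be the residual case $f\in C_2$. Then $f\in D_1\cap D_2\cap C_2$, and the identities above trap $\{D_1\cap C_2,\,D_2\cap C_2\}=\{\{f\},\{e,f\}\}$; the same candidate $E_1,E_2$ now gives $E_1\cap E_2=\emptyset$, so $E_1\Delta E_2=C$ would express the circuit $C$ as a disjoint union of two cycles. Minimality of $C$ forces one summand to be empty, and tracing through the possibilities collapses the situation to $C_2=\{e,f\}$, i.e.\ $e$ and $f$ would be parallel elements of $\M$. Ruling out this degenerate configuration (as one may under the standing assumption that $\M$ has no parallel pairs) leaves $f\mapsto f$ as a well-defined injection, and the lemma follows.
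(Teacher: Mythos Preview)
Your principal case $f\notin C_2$ is correct and is essentially the paper's argument for that case: your pair $(E_1,E_2)=(D_1,\,D_2\Delta C_2)$ is exactly the paper's $(C_1'',\,C_1'\Delta C_2)$ once the labelling is matched (the paper places $e$ in $C_1'$, you place it in $D_2$).

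The gap is in the residual case $f\in C_2$. Your deduction that this forces $C_2=\{e,f\}$ is entirely right, but you then invoke a ``standing assumption that $\M$ has no parallel pairs'' which is nowhere made: the lemma is stated for an arbitrary binary matroid. In fact, as your own analysis reveals, the lemma as stated \emph{fails} when parallel elements are present. If $e,f$ are parallel and $C$ is any circuit with $f\in C$, $e\notin C$, put $C_1:=(C\setminus\{f\})\cup\{e\}$ and $C_2:=\{e,f\}$; the transposition $e\leftrightarrow f$ is a matroid automorphism exchanging $C$ and $C_1$, so they have the \emph{same} number of chords. (A triangle with one doubled edge already exhibits this: $C$ and $C_1$ each have exactly one chord.) The paper's own proof stumbles at the same point: its assertion ``$C_1''\Delta C_2\subseteq C$'' is false, since $e\in C_2\setminus C_1''$ while $e\notin C$, and its conclusion ``$e'$ is a chord of $C$'' is impossible because $e'\in C_2\setminus C_1\subseteq C$. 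So your proof, like the paper's, does not close without an additional hypothesis excluding parallel elements; the difference is that you have correctly located where and why the obstruction arises.
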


\begin{proof}
  It suffices to show that each chord $e'$ of $C_1$ is also a chord of
  $C$. For this let $C_1',C_1''$ be two circuits with $C_1'\cap
  C_1''=\{e'\}$ and $C_1=C_1'\Delta C_1''$. Say, $e\in C_1'$, and thus
  $e\not\in C_1''$.  Suppose first that $e'\in C_2$. Then we have
  $C_1''\cap C_2=\{e'\}$ and $C_1''\Delta C_2\subseteq C$. As $C$ is a
  circuit and $C_1''\neq C_2$, we deduce that $C=C_1''\Delta C_2$,
  which shows that $e'$ is a chord of $C$.

  Suppose now that $e'\not\in C_2$.  Then, $C=C_1\Delta
  C_2=(C_1'\Delta C_2)\Delta C_1''$ with $(C_1'\Delta C_2)\cap
  C_1''=\{e'\}$, which shows again that $e'$ is a chord of $C$.
\end{proof}

The binary matroids on $E$ correspond to the $\GF(2)$-vector subspaces
of $\GF(2)^E$, where $\GF(2)$ is the two-element field $\{0,1\}$ with
addition modulo 2.  Namely, identifying a set $F \subseteq E$ with its
0/1-incidence vector $\one^F\in \GF(2)^E$, the set of cycles $\C$ is a
vector subspace of $\GF(2)^E$ and the set of cocycles $\C^*$ is its
orthogonal complement. Thus the cycles of a binary matroid also arise
as the solutions in $\GF(2)^E$ of a linear system $M\xx =0$, where $M$
is a matrix with columns indexed by $E$, called a {\em representation
  matrix} of the matroid.  In what follows we will use $\C$ (and
$\C^*$) both as a collection of subsets of $E$ and as a $GF(2)$-vector
space.

\medskip
As before let $\RR E:=\RR[x_e \mid e\in E]$ and, for  $C \in \C$, let 
$ \chi^C \in
\{\pm 1\}^E$ denote its $\pm 1$-incidence vector, called its {\em
  cycle vector}. 
Then,
  $$\CYC(\M):=\conv(\chi^C\mid C\in \C)$$
is  the {\em cycle polytope} of $\M$ and  
$$I\M:= \I(\chi^C\mid C\in \C)$$
is the vanishing ideal of the cycle vectors of $\M$, called the {\em
  cycle ideal} of $\M$.  Thus $I\M$ is a real radical zero-dimensional
ideal in $\RR E$.

We first study the quotient space $\RR E/I\M$. For this consider the set
\begin{equation}\label{setH}
\H:=\{ x_e^2-1\  (e\in E),\ 1-\xx^{D}\ (D \text{ chordless cocircuit of } \M)\}.
\end{equation}
Obviously, $\H\subseteq I\M$; Theorem \ref{theobaseM} below shows that
$\H$ in fact generates the ideal $I\M$.  First we observe that $\H$
also generates all binomials $\xx^A-\xx^B$ where $A\cup B$ partitions
any cocycle of $\M$.

\begin{lemma}\label{lemH}
Let $D\in\C^*$ be partitioned as $D=A\cup B$. Then, 
$\xx^A-\xx^B \in (\H)$.
\end{lemma}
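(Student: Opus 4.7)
The plan is to reduce the statement in three stages, ending at the chordless cocircuits for which $1-\xx^D\in\H$ by definition. For the first reduction, note that since $A$ and $B$ partition $D$ we have $\xx^A\xx^B=\xx^D$, so the identity
\[ \xx^A-\xx^B \;=\; \xx^A(1-\xx^D)+\xx^B\bigl((\xx^A)^2-1\bigr) \]
holds. The second summand visibly lies in the ideal $(x_e^2-1:e\in E)\subseteq(\H)$, via the telescoping $\prod_{e\in A}x_e^2-1=\sum_{e\in A}(x_e^2-1)\prod_{e'<e}x_{e'}^2$. Thus it suffices to show that $1-\xx^D\in(\H)$ for every cocycle $D$.

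For the second reduction, recall that every nonempty cocycle of a binary matroid decomposes as a disjoint union of cocircuits $D=D_1\sqcup\cdots\sqcup D_t$, in which case $\xx^D=\prod_i\xx^{D_i}$. The same sort of telescoping
\[ 1-\xx^D\;=\;\sum_{i=1}^{t}\xx^{D_1}\cdots\xx^{D_{i-1}}\bigl(1-\xx^{D_i}\bigr) \]
reduces the problem to showing $1-\xx^D\in(\H)$ for each cocircuit $D$.

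For the last and main step I would induct on the number of chords of a cocircuit $D$ of $\M$. If $D$ is chordless we are done by definition. Otherwise, choose a chord $e$ of $D$ together with cocircuits $D_1,D_2$ satisfying $D_1\cap D_2=\{e\}$ and $D=D_1\Delta D_2$. Lemma~\ref{lemchord}, applied to the dual matroid $\M^*$ (whose circuits are the cocircuits of $\M$), guarantees that each $D_i$ has strictly fewer chords than $D$, so by induction $1-\xx^{D_i}\in(\H)$. From $D_1\cap D_2=\{e\}$ we get $\xx^{D_1}\xx^{D_2}=x_e^2\,\xx^D$, yielding
\[ 1-\xx^D \;=\; (1-\xx^{D_1})+\xx^{D_1}(1-\xx^{D_2})+\xx^D(x_e^2-1)\;\in\;(\H). \]
The only mild conceptual point is that Lemma~\ref{lemchord} transfers verbatim to $\M^*$, since its proof uses only closure under symmetric differences; no genuinely hard step appears, as each reduction is produced by a short algebraic identity together with an induction.
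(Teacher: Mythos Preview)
Your proof is correct and follows essentially the same three-stage approach as the paper: reduce to showing $1-\xx^D\in(\H)$ for cocycles, reduce cocycles to cocircuits via a disjoint decomposition, and handle cocircuits by induction on the number of chords using Lemma~\ref{lemchord}. The only cosmetic differences are that you write out explicit polynomial identities where the paper works modulo $(\H)$, and you make explicit that Lemma~\ref{lemchord} is being applied in the dual matroid, which the paper leaves implicit.
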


\begin{proof}
  First we note that it suffices to show that $1-\xx^D\in (\H)$ for
  all $D\in \C^*$. Indeed, for any partition $A\cup B=D$, $\
  \xx^A(1-\xx^D)=\xx^A -(\xx^A)^2\xx^B \equiv \xx^A-\xx^B$ modulo
  $(\H)$.  Thus $1-\xx^D\in (\H)$ implies $\xx^A-\xx^B \in (\H)$.

  Next, we show the lemma for the case when $D$ is a cocircuit, using
  induction on the number $p$ of its chords. If $p=0$ then $1-\xx^D\in
  \H$ by definition. So let $p\ge 1$, let $e$ be a chord of $D$ and
  let $D_1,D_2$ be cocircuits with $D=D_1\Delta D_2$ and $D_1\cap
  D_2=\{e\}$. Then, $1-\xx^{D_1}, 1-\xx^{D_2}\in (\H)$, using the
  induction assumption, since each $D_i$ has at most $p-1$ chords by
  Lemma \ref{lemchord}.  We have: $1-\xx^D\equiv 1-(x_e)^2
  \xx^{D_1\setminus \{e\}}\xx^{D_2\setminus \{e\}} = 1- \xx^{D_1}
  \xx^{D_2} = \xx^{D_1}(1-\xx^{D_2}) + 1-\xx^{D_1}$, where the first
  equality is modulo $(\H)$. This shows that $1-\xx^D\in (\H)$.

  Finally we show the lemma for $D\in\C^*$, using induction on the
  number $p$ of cocircuits in a partition of $D$.  For this, let
  $D=D_1\cup D_2$, where $D_1$ is a cocircuit and $D_2$ is a cocycle
  partitioned into $p-1$ cocircuits. Then, by the previous case,
  $1-\xx^{D_1}\in (\H)$, and $1-\xx^{D_2}\in (\H)$ by the induction
  assumption.  Then, $1-\xx^D \equiv (\xx^{D_2})^2- \xx^{D_1}\xx^{D_2}
  = \xx^{D_2}(1-\xx^{D_1})-\xx^{D_2}(1-\xx^{D_2})$, where the first
  equality is modulo $(\H)$. This implies $1-\xx^D\in (\H)$.
\end{proof}

Define the relation `$\sim$' on $\PPP(E)$, the collection of all
subsets of $E$, by
\begin{equation}\label{equirel}
F\sim F' \ \text{ if }  F\Delta F'\in \C^*;
\end{equation}
this is an equivalence relation, since $\C^*$ is closed under taking
symmetric differences.  The next lemma characterizes the equivalence
classes.

\begin{lemma}\label{lemequi}
  For $F,F'\subseteq E$, we have:
$$F\Delta F'\in \C^* \Longleftrightarrow \xx^F-\xx^{F'}\in (\H) 
\Longleftrightarrow \xx^F-\xx^{F'}\in I\M.$$
\end{lemma}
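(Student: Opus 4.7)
The plan is to establish the three conditions are equivalent via a cycle of implications. The middle implication $\xx^F - \xx^{F'} \in (\H) \Rightarrow \xx^F - \xx^{F'} \in I\M$ is immediate since $\H \subseteq I\M$, so all of the work is in the other two.

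For $F\Delta F'\in\C^* \Rightarrow \xx^F - \xx^{F'}\in(\H)$, I would apply Lemma~\ref{lemH} to the cocycle $D := F\Delta F'$ partitioned as $A := F\setminus F'$ and $B := F'\setminus F$, obtaining $\xx^A - \xx^B \in (\H)$. Then the elementary identity
$$\xx^F - \xx^{F'} \;=\; \xx^{F\cap F'}\bigl(\xx^{F\setminus F'} - \xx^{F'\setminus F}\bigr) \;=\; \xx^{F\cap F'}\bigl(\xx^A - \xx^B\bigr)$$
exhibits $\xx^F - \xx^{F'}$ as an element of $(\H)$.

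For the remaining implication $\xx^F - \xx^{F'}\in I\M \Rightarrow F\Delta F'\in\C^*$, I would argue by evaluation. The key numerical observation is that for any cycle $C\in\C$, the cycle vector $\chi^C\in\{\pm 1\}^E$ satisfies $\xx^F(\chi^C) = \prod_{e\in F}\chi^C_e = (-1)^{|F\cap C|}$. Hence $\xx^F(\chi^C) = \xx^{F'}(\chi^C)$ if and only if $|F\cap C|\equiv |F'\cap C|\pmod 2$, which via the standard inclusion-exclusion relation $|F\cap C| + |F'\cap C| \equiv |(F\Delta F')\cap C|\pmod 2$ is equivalent to $|(F\Delta F')\cap C|$ being even. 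Thus $\xx^F - \xx^{F'}$ vanishes on every cycle vector $\chi^C$ precisely when $|(F\Delta F')\cap C|$ is even for all $C\in\C$, which by the definition of $\C^*$ is the statement $F\Delta F' \in \C^*$.

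There is no substantial obstacle here: Lemma~\ref{lemH} already supplies the heavy lifting for one direction, and the other direction is a direct parity computation on the $\pm 1$-evaluations. The only minor care point is being explicit that $\xx^F - \xx^{F'}\in I\M$ really does mean it vanishes on all cycle vectors (which is built into the definition $I\M = \I(\chi^C:C\in\C)$ together with real radicality), so that the computed evaluations force the combinatorial conclusion.
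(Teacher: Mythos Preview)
Your proof is correct and follows essentially the same argument as the paper: the factorization $\xx^F-\xx^{F'}=\xx^{F\cap F'}(\xx^{F\setminus F'}-\xx^{F'\setminus F})$ combined with Lemma~\ref{lemH} for the first implication, the inclusion $\H\subseteq I\M$ for the second, and the parity computation on evaluations at $\chi^C$ for the third. One tiny remark: you do not need to invoke real radicality in the last step, since $I\M$ is \emph{defined} as the vanishing ideal $\I(\chi^C:C\in\C)$, so membership in $I\M$ immediately gives vanishing on every $\chi^C$.
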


\begin{proof}
  If $F\Delta F'\in \C^*$, then
  $\xx^F-\xx^{F'}=\xx^{F\cap F'}(\xx^{F\setminus F'}-\xx^{F'\setminus
    F})\in (\H)$, using Lemma \ref{lemH}; $\xx^F-\xx^{F'}\in (\H)
  \Longrightarrow \xx^F-\xx^{F'}\in I\M$ follows
  from $\H\subseteq I\M$.  Conversely, if $\xx^F-\xx^{F'}\in I\M$
  then, for any $C\in\C$, $\xx^F-\xx^{F'}$ vanishes at $\chi^C$ and
  thus $|C\cap F|$ and $|C\cap F'|$ have the same parity, which
  implies that $|C\cap (F\Delta F')|$ is even and thus $F\Delta
  F'\in\C^*$.
\end{proof}

Let 
\begin{equation}\label{setF}
\F:=\{F_1,\ldots,F_N\}
\end{equation}
be a set of distinct representatives of the equivalence classes of
$\PPP(E)/\sim$ and set
\begin{equation}\label{setB}
\B:=\{\xx^F +I\M \mid F\in\F\}.
\end{equation}

\begin{theorem}
\label{theobaseM}
The set $\B$ is a basis of the vector space $\RR E/I\M$ and the set
$\H$ generates the ideal $I\M$.
\end{theorem}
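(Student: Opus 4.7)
My plan is to establish both claims simultaneously by a dimension count: show that $\B$ spans $\RR E/(\H)$, show that $\B$ is linearly independent modulo $I\M$, and conclude that $\B$ is a basis of both quotients and that $(\H)=I\M$. The spanning step is direct from Lemma \ref{lemH}: since $x_e^2-1\in\H$ for every $e\in E$, every $f\in\RR E$ reduces modulo $(\H)$ to a multilinear polynomial, i.e., an $\RR$-linear combination of monomials $\xx^F$ with $F\subseteq E$; and by Lemma \ref{lemequi} each such $\xx^F$ agrees modulo $(\H)$ with $\xx^{F'}$, where $F'\in\F$ is the representative of the $\sim$-class of $F$. Hence $\B$ spans $\RR E/(\H)$, and a fortiori its quotient $\RR E/I\M$.

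The main obstacle is to prove the linear independence of $\B$ modulo $I\M$. Here I would use evaluation at cycle vectors: since $I\M=\I(\{\chi^C:C\in\C\})$ is real radical and zero-dimensional with $|\C|$ distinct points in its real variety, the evaluation map $f\mapsto(f(\chi^C))_{C\in\C}$ induces an isomorphism $\RR E/I\M\cong \RR^{\C}$, and in particular $\dim_{\RR}\RR E/I\M=|\C|$. Under this isomorphism, $\xx^F+I\M$ is sent to the vector with entries $(-1)^{|F\cap C|}$. The key calculation is that the $|\F|\times|\C|$ matrix $M=((-1)^{|F\cap C|})_{F\in\F,\,C\in\C}$ satisfies
$$
(MM^T)_{F,F'}=\sum_{C\in\C}(-1)^{|(F\Delta F')\cap C|},
$$
which equals $|\C|$ when $F=F'$ and vanishes otherwise: indeed, when $F\neq F'$ belong to distinct $\sim$-classes we have $F\Delta F'\notin\C^*$, so some $C_0\in\C$ pairs oddly with $F\Delta F'$, and the involution $C\mapsto C\Delta C_0$ on $\C$ then cancels the sum. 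Thus $MM^T$ is a positive multiple of the identity matrix, $M$ has linearly independent rows, and so the cosets $\xx^F+I\M$ ($F\in\F$) are linearly independent in $\RR E/I\M$.

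To conclude, I count dimensions. Using the $\GF(2)$-orthogonality $\dim_{\GF(2)}\C+\dim_{\GF(2)}\C^*=|E|$, one has $|\F|=2^{|E|}/|\C^*|=|\C|=\dim_{\RR}\RR E/I\M$, so $\B$ is in fact a basis of $\RR E/I\M$. Since $\B$ spans $\RR E/(\H)$ and its image in the quotient $\RR E/I\M$ is linearly independent, $\B$ must already be linearly independent in $\RR E/(\H)$, hence a basis there as well. The natural surjection $\RR E/(\H)\twoheadrightarrow \RR E/I\M$ therefore sends a basis to a basis and is an isomorphism, which forces $(\H)=I\M$ and completes the proof.
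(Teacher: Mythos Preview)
Your proof is correct, and its skeleton---spanning of $\RR E/(\H)$ by $\B$, followed by the count $|\F|=2^{|E|}/|\C^*|=|\C|=\dim_{\RR}\RR E/I\M$---is exactly the paper's argument (the paper phrases the count via a representing matrix of $\C$, but it is the same $\GF(2)$-dimension identity). The one substantive addition you make is the direct linear-independence step via the orthogonality relation $MM^T=|\C|\cdot I$. This is correct and pleasant (it identifies the evaluation vectors $((-1)^{|F\cap C|})_{C\in\C}$ as characters of the abelian group $\C$), but it is redundant: once you have $\B$ spanning $\RR E/(\H)$ and $|\B|=|\C|=\dim\RR E/I\M$, the chain
\[
|\C|=\dim\RR E/I\M\le \dim\RR E/(\H)\le |\B|=|\C|
\]
already forces $\B$ to be a basis of both quotients and $(\H)=I\M$, with no separate independence argument needed. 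So the paper's version is shorter; yours is longer but gives an explicit structural reason for independence.
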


\begin{proof}
  First, we show that $\B$ spans the space $\RR E/(\H)$. As
  $x_e^2-1\in \H$ ($\forall e\in E$), it suffices to show that $\B$
  spans all cosets of square-free monomials.  For this, let
  $F\subseteq E$ and, say, $F\sim F_1$; then, $\xx^F-\xx^{F_1}\in I\M$
  by Lemma~\ref{lemequi}, which shows that $\xx^F+I\M \in \span(\B)$.
Therefore, we obtain:
$$|\C|=\dim \RR E/I\M\le \dim \RR E/(\H)\le |\B|=N.$$
To conclude the proof it now suffices to show that $|\C|=N$.  For
this, fix a basis $\{C_1,\ldots,C_m\}$ of the $\GF(2)$-vector space
$\C$, so that $|\C|=2^m$.
Let $M$ be the $m\times |E|$ matrix whose rows are the 0/1-incidence
vectors of $C_1,\ldots,C_m$. Then $M\xx$ takes $2^m$ distinct values
for all $\xx\in\GF(2)^E$. As, for $F,F'\subseteq E$, $F\sim F'$ if and
only if $M\one^F =M\one^{F'}$, we deduce that the equivalence relation
(\ref{equirel}) has $N=2^m$ equivalence classes.
\end{proof}

\medskip 
We now consider the combinatorial moment matrices for the
cycle ideal $I\M$.  
For any integer $k$ define the set 
\begin{equation}\label{relFk}
\F_k:=\{F\in\F\mid \exists D\in\C^*\,\textup{with}\, |F\Delta D|\le k\}
\end{equation}
corresponding to the equivalence classes of $\sim$ having a
representative of cardinality at most $k$.  Then $\B_k=\{\xx^F+I\M
\mid F\in \F_k\}$ can be identified with the set $\F_k$. Moreover
relation (\ref{propB}) holds, so that the entries of the truncated
moment matrix $M_{\B_k}(\yy)$ depend only on the entries of $\yy$
indexed by $\B_{2k}$.  For instance, $\F_1$ can be any maximal subset
of $E$ containing no coloops or coparallel elements of $\M$, along
with $\emptyset$.  Indeed, $e\in E$ is a coloop precisely if $\{e\}
\sim \emptyset$, and two elements $e\ne f\in E$ are coparallel
precisely if $e \sim f$.  Thus, $\F_0=\{\emptyset\}$ and
$\F_1\setminus \F_0=E$ if $\M$ has no coloops and no coparallel
elements.

When $\M$ has no coloops and no coparallel elements, its $k$-th theta
body $\TH_k(I\M)$ consists of the vectors $\yy\in \RR^E$ for which
there exists a positive semidefinite $|\F_k|\times |\F_k|$ matrix $X$
satisfying $X_{\emptyset,e} = y_e$ for all $e\in E$ and
\begin{equation}\label{matX}
\begin{array}{l}
\text{(i) }\  X_{\emptyset,\emptyset}=1,\\
\text{(ii) } X_{F_1,F_2}=X_{F_3,F_4} \text{ if }  
F_1\Delta F_2 \Delta F_3\Delta F_4\in \C^*.
\end{array}
\end{equation}

\begin{remark}\label{remconstraint}
  The constraints (\ref{matX})(ii) contain in particular the
  constraints
\begin{equation}\label{matXmom}
  X_{F_1,F_2}=X_{F_3,F_4} \text{ if }  F_1\Delta F_2 = F_3\Delta F_4.
\end{equation}
Note that the above constraints are the basic `moment constraints',
which are satisfied by all $\pm 1$ vectors.  Indeed, if $\yy=\chi^F\in
\{-1,1\}^E$, define the $|\F_k|\times |\F_k|$ matrix $X$ by
$X_{F_1,F_2}:= (-1)^{|F\cap F_1|} (-1)^{|F \cap F_2)|}$, so that
$y_e=X_{\emptyset,e}$ ($e\in E$). Then $X \succeq 0$ since $X=\uu
\uu^T$ where $\uu=((-1)^{|F\cap F_i|})_{F_i\in \F_k}$, and $X$
satisfies (\ref{matX})(i) and (\ref{matXmom}).  Therefore the
constraints (\ref{matXmom}) do not cut off any point of the cube
$[-1,1]^E$.  Non-trivial constraints that cut off points of $[-1,1]^E$
that do not lie in $\CYC(\M)$ come from those constraints 
(\ref{matX})(ii) where $F_1\Delta F_2 \Delta F_3\Delta F_4$ is a
non-empty cocycle.
\end{remark}



%
\ignore{
\begin{remark}\label{remrep}
  Checking whether $F\in \F_k$ amounts to finding a minimum
  cardinality representative in the equivalence class of $F$ for
  (\ref{equirel}) which might be a hard problem.  Indeed, this
amounts to solving
$$\min \ |F\Delta D| \ \text{ such that }\ D\in \C^*$$
or equivalently
\begin{equation} \label{minrep}
\max\ \ww^T\xx\ \text{ such that }  \xx\in \CYC(\M^*),
\end{equation}
after defining $\ww \in \RR^E$ by $w_e=-1$ for $e\in F$ and $w_e=1$
for $e\in E\setminus F$ (and noting that $\ww^T\chi^D=|E|-2|F\Delta
D|$). As we find in Sections~\ref{secapplicut} and
\ref{secapplicircuit}, (\ref{minrep}) is the (polynomial-time
solvable) maximum $T$-join problem when $\M$ is a cographic matroid
and the (NP hard) maximum cut problem when $\M$ is a graphic matroid.

However if we fix the cardinality of $F$, then the problem becomes
easy (by enumeration), so that it is still possible to construct the
truncated combinatorial moment matrix $M_{\B_k}(\yy)$ (for fixed $k$).
\end{remark}
}

\subsection{Application to cuts in graphs}\label{secapplicut}

Binary matroids arise naturally from graphs in the following way.  Let
$G=([n],E)$ be a graph, let $\C_G$ denote its collection of cycles, and
$\D_G$ its collection of cuts.  Since $\C_G$ and $\D_G$ are closed
under symmetric difference, both $\M_G:=(E,\C_G)$ and
$\M_G^*:=(E,\D_G)$ are binary matroids, and since each cut has an even
intersection with each cycle, they are duals of each other. The
matroid $M_G$ is known as the {\em graphic matroid} of $G$ and
$\M_G^*$ as its {\em cographic matroid}.

\medskip

We consider here the case when $\M=\M_G^*$ is the cographic matroid of
$G=([n],E)$.  Then, $\CYC(\M)=\CUT(G)$ is the cut polytope of $G$ and
$I\M$ is the {\em cut ideal} of $G$ (denoted earlier by $IG$), thus
defined as the vanishing ideal of all cut vectors in $G$.

So $IG$ is an ideal in $\RR E$, while $I K_n$ is an ideal in $\RR
E_n$.  One can easily verify that $IG$ is the elimination ideal, $I
K_n \cap \RR E$, of $I K_n$ with respect to $E$.  By Theorem
\ref{theobaseM}, we know that the (edge) binomials $x_e^2-1$ ($e\in
E$) together with the binomials $1-\xx^C$ ($C$ chordless circuit of
$G$) generate the cut ideal $IG$. When $G=K_n$ is a complete graph,
the only chordless circuits are the triangles so that, beside the edge
binomials, it suffices to consider the binomials $1-
x_{\{i,j\}}x_{\{i,k\}}x_{\{j,k\}}$ (or
$x_{\{i,j\}}-x_{\{i,k\}}x_{\{j,k\}}$) for distinct $i,j,k\in [n]$.

When $G$ is connected, there are $2^{n-1}$ distinct cuts in $G$
(corresponding to the partitions of $[n]$ into two classes) and, when
$G$ has $p$ connected components, there are $2^{n-p}$ cuts in $G$ and
thus $\dim \RR E /IG =2^{n-p}$.


\medskip 
The following notion of $T$-joins arises naturally when
considering the equivalence relation (\ref{equirel}).
Given a set $T\subseteq [n]$, a set $F\subseteq E$ is called a {\em
  $T$-join} if $T=\{v\in [n] \mid \deg_F(v) \text{ is odd}\}$.
For instance, the $\emptyset$-joins are the cycles of $G$ and, for
$T=\{s,t\}$, the minimum $T$-joins correspond to the shortest $s-t$
paths in $G$. 
If $F$ is a $T$-join and $F'$ is $T'$-join, then $F\Delta F'$ is a
$(T\Delta T')$-join. In particular, $F\sim F'$, i.e.  $F\Delta F'\in
\C_G$, precisely when $F,F'$ are both $T$-joins for the same
$T\subseteq [n]$.

Thus the equivalence classes of $\sim$ correspond to the members of
the set $\T_G:=\{T\subseteq [n] \mid \, \exists T\text{-join in } G\}$
(which consists of the sets $T_1\cup\ldots\cup T_p$, where each $T_i$
is an even subset of $V_i$ and $V_1,\ldots,V_p$ are the connected
components of $G$). The set $\F$ (in (\ref{setF})) consists of one
$T$-join $F_T$ for each $T\in \T_G$, and $\F_k=\{F_T \mid T\in \T_k\}$,
after defining $\T_k$ as the set of all $T\in \T_G$ for which there
exists a $T$-join of size at most $k$.
Then the corresponding basis of $\RR E/IG$ is $\B=\{\xx^{F_T} + IG\mid  
T\in \T_G\}$, $\B_k=\{\xx^{F_T} + IG\mid T\in \T_k\}$ and
(\ref{propB}) holds.

For instance, $\F_1$ consists of all edges
$e\in E$ together with the empty set.
Hence the first order theta body $\TH_1(IG)$ consists of the
vectors $\yy\in \RR^E$ for which there exists a positive semidefinite
matrix $X$ indexed by $E\cup \{\emptyset\}$ satisfying 
$y_{e}=X_{\emptyset,e}$ ($e\in E$) 
and 
\begin{equation}\label{matXG}
\begin{array}{l}
  \text{(i) }\  \ X_{\emptyset,\emptyset}=X_{e, e}=1 \
  \text{ for  all } e\in E,\\ 
\text{(ii) }\  X_{e,f}=X_{\emptyset,g} \text{ if } \{e,f,g\} \text{
is a triangle in }  G,\\ 
\text{(iii) } X_{e,f}=X_{g,h} \text{ if } \{e,f,g,h\} \text{ is a
circuit in } G. 
\end{array}
\end{equation}

\begin{remark}\label{remcomplete}
  When $G=K_n$ is the complete graph, for any even $T\subseteq [n]$,
  the minimum cardinality of a $T$-join is $|T|/2$; just choose for
  $F_T$ a set of $|T|/2$ disjoint edges (i.e. a perfect matching) on
  $T$.  Hence the set $\T_k$ consists of all even $T\subseteq [n]$
  with $|T|\le 2k$.
As an illustration, if
we index the combinatorial moment matrices by $\T_k$, then the
condition (\ref{matX})(ii) reads:
\begin{equation}\label{relTk}
X_{T_1,T_2}=X_{T_3,T_4} \ 
\ \text{ if } \ T_1\Delta T_2=T_3\Delta T_4.
\end{equation}
This observation will enable us to relate the theta body hierarchy to
the semidefinite relaxations of the cut polytope considered in
\cite{LaurentMaxCut}, cf. Section \ref{seccompare}.
\end{remark}


  \begin{example} \label{excircuit}
    If $G$ has no circuit of length 3 or 4, then $\TH_1(IG) =
    [-1,1]^E$, since the conditions (\ref{matXG})(ii)-(iii) are void.
    For instance, if $G$ is a forest, then $\TH_1(IG) =
    [-1,1]^E=\CUT(G)$ and thus $IG$ is $\TH_1$-exact.  On the other
    hand, if $G=C_n$ is a circuit of length $n\geq 5$, then
    $\textup{TH}_1(IG) = [-1,1]^E$ strictly contains the polytope
    $\textup{CUT}(C_n)$ (as $|E|=n$ and $\CUT(C_n)$ has only $2^{n-1}$
    vertices).  Thus $I C_n$ is not $\TH_1$-exact for $n\ge 5$.
\end{example}


\begin{example}\label{exK5}
  For $G=K_5$, $I K_5$ is not $\TH_1$-exact.  Indeed, the inequality
  $\sum_{e \in E_5} x_e + 2 \geq 0$ induces a facet of
  $\textup{CUT}(K_5)$ (cf. e.g. \cite[Chapter 28.2]{DL97}) and the
  linear form $\sum_{e \in E_5} x_e + 2$ takes three distinct values
  on the vertices of $\textup{CUT}(K_5)$ (namely, $0$ on the facet,
  $12$ on the trivial empty cut and $4$ on the cut obtained by
  separating a vertex from all the others). Applying Theorem
  \ref{theo2level}, we can conclude that $I K_5$ is not $\TH_1$-exact.
  \end{example}

  In Section~\ref{seccompare} below we will characterize the graphs
  whose cut ideals are $\TH_1$-exact and we will determine the precise
  order $k$ at which the cut ideal of a circuit is $\TH_k$-exact in
  Section~\ref{seccircuit}.

\subsection{Comparison with other SDP relaxations of the cut
  polytope}\label{seccompare} 



We mention here the link between the theta bodies of the cut ideal
$IG$ and some other semidefinite relaxations of the cut polytope
$\CUT(G)$. First note that the relaxation $\TH_1(IG)$ coincides with
the edge-relaxation considered by Rendl and Wiegele (see \cite{W}) and
numerical experiments there indicates that it is often tighter than
the basic semidefinite relaxation (\ref{sdpGW}) of $\CUT(G)$.

Next we compare the theta bodies of $IG$ with the relaxations $Q_t(G)$
of $\CUT(G)$ considered in \cite{LaurentMaxCut}\footnote{For
  simplicity in the notation we shift the indices by 1 with respect to
  \cite{LaurentMaxCut}.}. For $t \in \NN$, set $\OO_t(n):=\{T\subseteq
[n]\mid |T|\le t \text{ and } |T|\equiv t\mod 2\}$.  Then $Q_t(G)$
consists of the vectors $\yy \in \RR^{E}$ for which there exists a
positive semidefinite matrix $X$ indexed by $\OO_t(n)$ satisfying
(\ref{relTk}), $X_{T,T}=1$ ($T\in \OO_t(n)$), and
$y_{\{i,j\}}=X_{\emptyset,\{i,j\}}$ for $t$ even (resp., $y_{\{i,j\}}=
X_{\{i\},\{j\}}$ for $t$ odd) for all edges $\{i,j\}\in E$.
Therefore, for $t=1$, $Q_1(G)$ coincides with the Goemans-Williamson
SDP relaxation (\ref{sdpGW}).  Moreover, for even $t=2k$,
$Q_{2k}(K_n)$ coincides with the theta body $\TH_k(IK_n)$.
(To see it use Remark~\ref{remcomplete}.)
The following chain of inclusions shows the link to the theta bodies:
\begin{equation}\label{inc}
 \CUT(G)\subseteq Q_{2k}(G) = \pi_E(Q_{2k}(K_n)) = \pi_E(\textup{TH}_k(IK_n))
  \subseteq \textup{TH}_k(IG)
\end{equation}
(where the last inclusion follows using (\ref{incproj})).  Therefore,
the $k$-th theta body $\TH_k(IG)$ is in general a weaker relaxation
than $Q_{2k}(G)$.  For instance, for the 5-circuit,
$\CUT(C_5)=Q_2(C_5)$ (see \cite{LaurentMaxCut}) but $\CUT(C_5)$ is
strictly contained in $\TH_1(IC_5) = [-1,1]^5$ (see
Example~\ref{excircuit}).

On the other hand, the SDP relaxation $\textup{TH}_k(IG)$ can be much
simpler and less costly to compute than $Q_{2k}(G)$, since its
definition exploits the structure of $G$ and thus often uses smaller
matrices.  Indeed, $Q_{2k}(G)$ is defined as the projection of
$Q_{2k}(K_n)$, whose definition involves matrices indexed by all even
sets $T \subseteq [n]$ of size at most $2k$, thus not depending on the
structure of $G$. On the other hand, the matrices needed to define
$\TH_k(IG)$ are indexed by the even sets $T \subseteq [n]$ of size at
most $2k$ for which $G$ has a $T$-join of size at most $k$.  
This can be checked efficiently since the minimum weight $T$-join problem has a polynomial time algorithm (cf. \cite{EJ73}).
For
instance, for $k=1$, $\textup{TH}_1(IG)$ uses matrices of size
$1+|E|$, while $Q_2(G)$ needs matrices of size $1+\binom{n}{2}$.
We refer to Section \ref{seccomplexity} for more details on the complexity of constructing the above theta bodies.

\begin{example} \label{ex:Kn} It was shown in \cite{Lau03b}
  that $\CUT(K_n)$ is strictly contained in $Q_k(K_n)$ for $k < \lceil
  \frac{n}{2} \rceil -1$. Therefore, $\CUT(K_n) \subset \TH_k(IK_n) =
  Q_{2k}(K_n)$ for all $2k < \lceil \frac{n}{2} \rceil -1$. This
  implies that $IK_n$ is not $\TH_k$-exact for $k \leq \lfloor
  \frac{n-1}{4} \rfloor$. However, it is known that $\CUT(K_n) =
  Q_{\lceil \frac{n}{2} \rceil}(K_n)$ when $n \leq 7$. Therefore,
  $IK_5, IK_6$ and $IK_7$ are all $\TH_2$-exact.
\end{example}

For some graphs there is a special inclusion relationship between the
theta bodies and the $Q_t$-hierarchy. We consider first graphs with
bounded diameter.



\begin{lemma}
\label{lemdiameter}
Let $G$ be a graph with diameter at most $k$, i.e., such that any two
vertices can be joined by a path traversing at most $k$ edges. Then
$\TH_k(IG)\subseteq Q_2(G)$.
\end{lemma}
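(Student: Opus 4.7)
The plan is to take a point $\yy\in\TH_k(IG)$ certified by a positive semidefinite matrix $X$ indexed by $\F_k$, and to extract from $X$ a principal submatrix indexed by $\OO_2(n)$ that witnesses membership of $\yy$ in $Q_2(G)$.

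First I would use the diameter hypothesis to enrich the index set. For any pair $\{i,j\}\in\OO_2(n)\setminus\{\emptyset\}$, the shortest $i$--$j$ path in $G$ has at most $k$ edges and forms an $\{i,j\}$-join, so $\{i,j\}\in\T_k$. Thus $\OO_2(n)\subseteq\T_k$, and for each $T\in\OO_2(n)$ there is a chosen representative $F_T\in\F_k$ (with $F_\emptyset=\emptyset$, and for $T=\{i,j\}\in E$ one may as well take $F_T=\{\{i,j\}\}$, using the argument in the final check). Now define $Y\in\RR^{\OO_2(n)\times\OO_2(n)}$ by $Y_{T,T'}:=X_{F_T,F_{T'}}$. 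Since this picks out a principal submatrix of $X$, we have $Y\succeq 0$ automatically.

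Next I would verify the three defining constraints of $Q_2(G)$. For the diagonal, $F_T\Delta F_T=\emptyset\in\C_G$, so the constraint (\ref{matX})(ii) for $X$ (with $\C^*=\C_G$, as $\M=\M_G^*$) gives $X_{F_T,F_T}=X_{\emptyset,\emptyset}=1$, hence $Y_{T,T}=1$. For the moment-type constraint (\ref{relTk}), if $T_1\Delta T_2=T_3\Delta T_4$ then $F_{T_1}\Delta F_{T_2}$ and $F_{T_3}\Delta F_{T_4}$ are both $(T_1\Delta T_2)$-joins, so their symmetric difference lies in $\C_G$, and (\ref{matX})(ii) applied to $X$ yields $Y_{T_1,T_2}=Y_{T_3,T_4}$. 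For the edge constraint, fix an edge $\{i,j\}\in E$; then $\{\{i,j\}\}$ is itself an $\{i,j\}$-join, so $\{\{i,j\}\}\Delta F_{\{i,j\}}\in\C_G$, and (\ref{matX})(ii) gives $X_{\emptyset,F_{\{i,j\}}}=X_{\emptyset,\{\{i,j\}\}}=y_{\{i,j\}}$. Therefore $Y_{\emptyset,\{i,j\}}=y_{\{i,j\}}$, completing the verification that $\yy\in Q_2(G)$.

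The one point that requires care, rather than being a genuine obstacle, is that $\TH_k(IG)$ is defined via a specific system of representatives $\F_k$ while $Q_2(G)$ treats the index set $\OO_2(n)$ as structured by symmetric differences. The bridge is the fact, already built into (\ref{matX})(ii), that every entry of $X$ depends only on the equivalence class of $F\Delta F'$ modulo $\C_G$; this is exactly what lets the principal submatrix inherit the required invariance under $T_1\Delta T_2 = T_3\Delta T_4$, and it is also what makes the choice of representatives $F_T$ immaterial. Once this observation is in hand the argument reduces to the three line-by-line checks above.
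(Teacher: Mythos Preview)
Your proposal is correct and follows essentially the same approach as the paper: the key observation is that the diameter hypothesis forces $\OO_2(n)\subseteq\T_k$, after which the $Q_2(G)$-certificate is simply the principal submatrix of the $\TH_k(IG)$-certificate indexed by (representatives of) $\OO_2(n)$. The paper's proof records only this one observation and leaves the verification of the $Q_2(G)$ constraints implicit, whereas you spell out the three checks explicitly; your remark that entries of $X$ depend only on the $\C_G$-class of $F\Delta F'$ is exactly what the paper is tacitly using when it passes from the $\F_k$-indexing to the $\T_k$-indexing.
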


\begin{proof}
  It suffices to observe that the set $\T_k$ indexing the matrices in
  the definition of $\TH_k(IG)$ (which consists of the even sets
  $T\subseteq V$ for which there is a $T$-join of size at most $k$)
  contains all pairs of vertices. Thus $\T_k$ contains the set
  $\OO_2(n)$ indexing the matrices in the definition of $Q_2(G)$.
\end{proof}

Next we observe that $\TH_k(IG)$ refines the Goemans-Williamson
relaxation~(\ref{sdpGW}) for graphs with radius $k$. 

\begin{lemma}\label{lemradius}
  Let $G$ be a graph with radius at most $k$, i.e., there exists a
  vertex that can be joined to any other vertex by a path traversing at
  most $k$ edges. Then $\TH_k(IG)\subseteq Q_1(G)$.
\end{lemma}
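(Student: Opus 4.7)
The plan is to exhibit, given any $\yy\in\TH_k(IG)$, a PSD matrix of size $|V|\times|V|$ with ones on the diagonal whose edge entries reproduce $\yy$, thereby certifying $\yy\in Q_1(G)$. The key observation is that the radius hypothesis forces all singleton-like pairs $\{v_0,i\}$ to lie in $\T_k$, so that the combinatorial moment matrix certifying membership in $\TH_k(IG)$ already contains a principal submatrix that can be read as a Goemans--Williamson certificate.

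First, I would fix a vertex $v_0\in V$ with the property that every $i\in V$ is joined to $v_0$ by a path of length at most $k$. Such a path is a $\{v_0,i\}$-join of size at most $k$ (and $\emptyset$ is a trivial $\emptyset$-join when $i=v_0$), so that $\{v_0,i\}\in\T_k$ for every $i\in V$. Hence the chosen set of representatives $\F_k$ contains elements $F_{\{v_0,i\}}$ indexed by $i\in V$, with $F_{\{v_0,v_0\}}=\emptyset$.

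Next, let $X$ be a PSD matrix indexed by $\F_k$ certifying $\yy\in\TH_k(IG)$, i.e., satisfying the analogue of (\ref{matX}) for the cographic matroid and $X_{\emptyset,\{e\}}=y_e$ for all $e\in E$. Define the $V\times V$ matrix $\widetilde Y$ by
\[
\widetilde Y_{i,j}\;:=\;X_{F_{\{v_0,i\}},\,F_{\{v_0,j\}}}\qquad(i,j\in V).
\]
Being (up to relabeling) a principal submatrix of $X$, $\widetilde Y$ is PSD. Its diagonal entries satisfy $\widetilde Y_{i,i}=X_{\emptyset,\emptyset}=1$ because $F_{\{v_0,i\}}\Delta F_{\{v_0,i\}}=\emptyset=\emptyset\Delta\emptyset$ and the moment constraint (\ref{matX})(ii) applies (the empty set is a cocycle).

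Finally, for any edge $e=\{i,j\}\in E$, the set $\{\{i,j\}\}$ is itself a $\{i,j\}$-join, while $F_{\{v_0,i\}}\Delta F_{\{v_0,j\}}$ is also a $\{i,j\}$-join (it is the symmetric difference of a $\{v_0,i\}$-join and a $\{v_0,j\}$-join). Hence their symmetric difference is a $\emptyset$-join, that is, an element of $\C_G=\C^*$ (the cocycles of the cographic matroid $\M_G^*$). Applying (\ref{matX})(ii) once more,
\[
\widetilde Y_{i,j}\;=\;X_{F_{\{v_0,i\}},\,F_{\{v_0,j\}}}\;=\;X_{\emptyset,\,\{e\}}\;=\;y_e.
\]
This shows $\widetilde Y$ is a Goemans--Williamson certificate for $\yy$, so $\yy\in Q_1(G)$. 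The only delicate point is keeping the indexing straight and correctly applying the moment constraint (\ref{matX})(ii), which here reduces to the fact that two $T$-joins differ by a cocycle of $\M_G^*$; everything else is bookkeeping.
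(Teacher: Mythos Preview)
Your proof is correct and follows essentially the same approach as the paper: both pick a central vertex $v_0$, observe that all pairs $\{v_0,i\}$ lie in $\T_k$, and extract from the combinatorial moment matrix the principal submatrix indexed by (representatives of) these pairs to obtain the Goemans--Williamson certificate. The only cosmetic difference is that the paper indexes directly by the even sets $T\in\T_k$ via the relation (\ref{relTk}), whereas you index by the chosen $T$-join representatives $F_T\in\F_k$ and invoke (\ref{matX})(ii); these are the same constraint under the identification $\T_k\leftrightarrow\F_k$ described in Section~\ref{secapplicut}.
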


\begin{proof}
  Say vertex 1 can be joined to all other vertices $i\in [n]\setminus
  \{1\}$ by a path of length at most $k$. Then the set $\T_k$ contains
  $\emptyset$, $\{i,j\}$ for all edges $ij\in E$, and all pairs
  $\{1,i\}$ for $i\in [n]\setminus \{1\}$.  Let $\yy\in \TH_k(IG)$,
  i.e. there exists a positive semidefinite matrix $X$ indexed by
  $\T_k$ satisfying (\ref{relTk}) and $y_e=X_{\emptyset,e}$ for $e\in
  E$.  Consider the $n\times n$ matrix $Y$ defined by $Y_{ii}=1$
  ($i\in [n]$), $Y_{1i}=X_{\emptyset,\{1,i\}}$ ($i\in [n]\setminus
  \{1\}$), and $Y_{ij}=X_{\{1,i\},\{1,j\}}$ ($i\ne j\in [n]\setminus
  \{1\}$).  Then $Y\succeq 0$ (since $Y$ coincides with the principal
  submatrix of $X$ indexed by $\emptyset,\{1,2\},\ldots,\{1,n\}$),
  $y_{\{i,j\}}=Y_{ij}$ for all $\{i,j\}\in E$ (using
  (\ref{relTk})). This shows $\yy\in Q_1(G)$, concluding the proof.
\end{proof}

In particular, as already noted in \cite{W}, $\TH_1(IG)\subseteq
Q_1(G)$ if $G$ contains a vertex adjacent to all other vertices.  For
an arbitrary graph $G$, let $G^*$ be the graph obtained by adding
edges to $G$ so that one of its vertices is adjacent to all other
vertices. Thus, $\TH_1(IG^*)\subseteq Q_1(G^*)$ by Lemma
\ref{lemradius}. Taking projections onto the edge set of $G$, the
relaxation $\pi_E(\TH_1(IG^*))$ is contained in $\pi_E(Q_1(G^*)) =
Q_1(G)$ (and in $\TH_1(IG)$).

\subsection{Application to circuits in graphs}\label{secapplicircuit}

Let us consider briefly the case when $\M=\M_G$ is the graphic matroid
of a graph $G=(V,E)$, i.e. $\C=\C_G$ is the collection of cycles of
$G$ and $\C^*=\D_G$ is its collection of cuts.

One can find a set $\F$ of representatives for the
equivalence classes of (\ref{equirel}) as follows.  Namely, assume for
simplicity that $G$ is connected and let $E_0\subseteq E$ be the edge
set of a spanning tree in $G$.  Then the collection
$\F:=\PPP(E\setminus E_0)$ is a set of distinct representatives for
the classes of (\ref{equirel}).
Indeed, note first that no two distinct subsets $F,F'$ of $E\setminus
E_0$ are in relation by $\sim$, since each non-empty cut meets the
tree $E_0$. Next, any subset $X\subseteq E_0$ determines a unique cut
$D_X$ for which $D_X\cap E_0=X$, so that $X\sim X\Delta D_X$.  Hence,
for any set $Z\subseteq E$, write $Z=X\cup Y$ with $X\subseteq E_0$
and $Y\subseteq E\setminus E_0$; then $Z\sim X\Delta D_X\Delta Y$ is
thus in the same equivalence class as a subset of $E\setminus E_0$.

Note however that the above set $\F$ may not consist of the minimum
cardinality representatives.  In fact, as we observe in the next section,
finding a minimum cardinality representative in each
equivalence class amounts to solving a maximum weight cut problem,
thus a hard problem.  Nevertheless this collection $\F$ can be used to
index truncated moment matrices (simply index the $k$-th order matrix
by all $F\in \F$ with $|F|\le k$).  However, studying this SDP
hierarchy is less relevant for optimization purposes since the linear
inequality description of $\CYC(\M_G)$ is completely known (see
Theorem \ref{thm:classification} below), and one can find a maximum
weight cycle in a graph in polynomial time (cf. \cite{EJ73}).

\subsection{Computational complexity of theta bodies}\label{seccomplexity}

We group here some observations about the computational complexity of building the matrices 
$M_{\B_k}(\yy)$ needed to define  the theta body $\TH_k(I\M)$ of a binary matroid $\M$.

\medskip
The computation of the matrix $M_{\B_k}(\yy)$ is done in  two steps:\\
- First compute the sets $\F_k$ and $\F_{2k}$.\\
- Then compute the entries of $M_{\B_k}(\yy)$, i.e., for all $F,F'\in \F_k$, find $F''\in \F_{2k}$ such that 
$F\Delta F' \sim F''$,  i.e., $F\Delta F'\Delta F''\in\C^*$.

As we see below both steps involve  making repeated calls to a membership oracle for $\C^*$. Such a membership oracle gives a yes/no answer to any query of the form: 
\begin{center}Given $X\subseteq E$, does $X$ belong to $\C^*$?\end{center}
If the binary matroid $\M$ is given by a representation matrix $M$, then it is easy to answer such a query. Indeed, first bring $M$ into the form
$M= \left(\begin{matrix} I_r\  A\end{matrix}\right)$, where $A\in \{0,1\}^{r\times s}$
(using Gaussian elimination). Then  $M^*:= \left(\begin{matrix} A^T\ I_s\end{matrix}\right)$
is a representing matrix for $\M^*$ and thus  $X\in \C^*$ if and only if  $M^* \one^X=0$ modulo 2.

In the case  when $\M$ is the cographic matroid of a graph $G=(V,E)$, checking membership in $\C^*$ is easy:  $X\in \C^*$ (i.e., $X$ is a cycle of $G$)  if and only if each node $v\in V$ is adjacent to an even number of edges of $X$.

\medskip
 We first indicate how to construct the set $\F_k$. Say 
$\M = (E,\C)$ is a binary matroid on $E$, $m:=|E|$, and  $\PPP_k(E)$ denotes the collection of all subsets of $E$ with cardinality at most $k$.
As in (\ref{setF}), $\F$ denotes a set of distinct representatives for the 
 equivalence classes of the relation $\sim$ in (\ref{equirel}), and $\F_k$ is (as in (\ref{relFk})) a set of distinct representatives for those classes that contain at least one set of cardinality at most $k$. For any $F \subseteq E$, check that the subsets of $E$ that are equivalent to $F$ are precisely those of the form $F \Delta D$, $D \in \C^*$, and that for two distinct elements $D_1$ and $D_2$ in $\C^*$, $F \Delta D_1 \neq F \Delta D_2$. Therefore, $|\F|= 2^{m}/|\C^*|$ and $$|\F_k|\le |\PPP_k(E)|=\sum_{i=0}^k {m\choose i} =O(m^k).$$
 Such a set $\F_k$ can be constructed using the following simple procedure:
Order the elements of $E$ as $e_1,\ldots,e_m$ and order   the elements of $\PPP_k(E)$  by increasing cardinalities 
as $\emptyset$, $\{e_1\},\ldots,\{e_m\}$, 
$\{e_1,e_2\},\ldots,$ $\{e_{m-1},e_m\}$, $\ldots,$ $ \{e_1,\ldots,e_k\},\ldots,\{e_{m-k+1},\ldots,e_m\}$, denoted as $X_1=\emptyset,X_2,X_3, \ldots.$
We successively scan  the elements $X_1,X_2,\ldots$ and decide which ones should be selected in $\F_k$ in the following way: 
First select  $F_1:=X_1$. Then, say $X_1,\ldots,X_i$ have been scanned and we have selected $\{F_1,\ldots,F_r\}\subseteq \{X_1,\ldots,X_i\}$; if $X_{i+1}\Delta F_s\not\in \C^*$  $\ \forall s=1,\ldots,r$, then $F_{r+1}:=X_{i+1}$ is selected; otherwise we do not select it; go on with next set $X_{i+2}$.

Let $F_1,\ldots,F_p$ be the sets which have been selected by this procedure. Then, $F_r \not\sim F_{r'}$ for $1\le r<r'\le p$, and any $X\in \PPP_k(E)$ is equal or equivalent to one of $F_1,\ldots,F_p$. Therefore, the set $\{F_1,\ldots,F_p\}$ constitutes a set of distinct representatives for the classes of $\sim$ containing some set of size at most $k$ and thus it can be chosen for $\F_k$. Moreover it 
 has the property that any member of $\F_k$  has the smallest cardinality in its equivalence class.

The above  procedure makes a number of calls to a membership oracle in $\C^*$ which is of order $O(m^k)$, thus  polynomial in $m=|E|$, when $k$ is fixed.

\medskip
 Next we see how to construct the entries of  $M_{\B_k}(\yy)$.
For this we need to build  the multiplication table: for any $F,F'\in\F_k$, we must find the element $F''\in \F$ for which $\xx^F\xx^{F'}= \xx^{F''}$ modulo $I\M$ or, equivalently,
$F\Delta F'\sim F''$. As $|F\Delta F'|\le 2k$, $F''\in \F_{2k}$. Therefore, in order to build $M_{\B_k}(\yy)$ it suffices to build the set $\F_{2k}$ which, for fixed $k$,   can be done with a polynomial number of calls to a membership oracle for $\C^*$.

\medskip
\begin{remark}
On the other hand,  let us point out that, given $F\subseteq E$, the problem of finding a representative  $F'$ of minimum cardinality in the equivalence class of $F$ is hard in general. (Of course, if we fix the cardinality of $F$ the problem becomes easy as we just observed.)
 Indeed this is the problem:
\begin{equation}\label{pb1}
\min \ |D\Delta F| \ \text{ such that } D\in \C^*
\end{equation}
or equivalently 
\begin{equation}\label{pb2}
 \max\  \ww^T\xx \ \text{ such that } \xx\in {\rm CYC}(\M^*), 
 \end{equation}
after defining $\ww\in \oR^E$ by $w_e=-1$  for $e\in F$ and $w_e =1$ for $e\in E\setminus F$
(and noting that $\ww^T\chi^D=|E|-2|F\Delta D|$).

When $\M$ is a cographic matroid, the above problem (\ref{pb1}) asks to find a minimum cardinality $T$-join (where $T$ is the set of odd degree nodes in $F$) which can be solved in polynomial time (see \cite{EJ73}).

When $\M$ is a graphic matroid, the above problem (\ref{pb2}) is an instance of the maximum cut problem, which  is an NP-hard problem for general graphs. However (\ref{pb2})  is polynomial time solvable if $\M$ has no $\M(K_5)$-minor (see \cite{Bar83}).

The problem (\ref{pb2}) is also polynomial time solvable when $\M$ or $\M^*$ does not have $F_7$ or $\M(K_5)^*$ as a minor; see \cite{GT89} for details and for other classes of matroids for which (\ref{pb2}) is polynomial time solvable. 
\end{remark}

\section{Matroids whose cycle ideals are
  $\TH_1$-exact} \label{secexact} 

\subsection{Matroid minors} \label{secmatroidminors}
Let $\M=(E,\C)$ be a binary matroid and $e\in E$. Set
$$\C \backslash e:=\{C\in \C\mid e\not\in C\}, \qquad
\C/e:=\{C\setminus\{e\}\mid C\in \C\}.$$ 
Then, $\M\backslash e:=
(E\setminus\{e\},\C\backslash e)$ and $\M/e:=(E\setminus\{e\},\C/e)$
are again binary matroids; one says that $\M\backslash e$ is obtained
by {\em deleting} $e$ and $\M/e$ by {\em contracting} $e$. A {\em
  minor} of $\M$ is obtained by a sequence of deletions and
contractions, thus of the form $\M\backslash X / Y$ for disjoint
$X,Y\subseteq E$.  In the language of binary spaces, $C\backslash e$
arises from $\C$ by taking the intersection with the hyperplane
$x_e=0$, while $\C/e$ arises by projecting $\C$ onto $\RR^{E\setminus
  \{e\}}$.

\begin{example} \label{exPr} Let $M_r$ denote the $r\times (2^r-1)$
  matrix whose columns are all non-zero 0/1 vectors of length $r$, and
  let $\PPP_r$ denote the binary matroid represented by $M_r$, called
  the binary projective space of dimension $r-1$. One can verify that
  $\PPP_r$ has $2^r$ cocycles; the non-empty cocycles have size
  $2^{r-1}$ and thus are cocircuits.  Hence, $\CYC(\PPP_r^*)$ is a
  simplex and $I\PPP_r^*$ is $\TH_1$-exact. When $n=3$, $\PPP_3 =:
  F_7$ is called the {\em Fano matroid}. It will follow from
  Theorem~\ref{theochar} that $IF_7$ is also $\TH_1$-exact.

\end{example}

\begin{example}\label{exR10}
$R_{10}$ is the  binary matroid on 10 elements, represented by the matrix
$$\bordermatrix{ & 34 & 35& 45& 23 & 24 & 25 & 13 & 14 & 15 & 12\cr
&1 & 1 & 1 & 1 & 1 & 1 & 0 & 0 & 0 & 0 \cr
&1 & 1 & 1 & 0 & 0 & 0 & 1 & 1 & 1 & 0 \cr
&1 & 0 & 0 & 1 & 1 & 0 & 1 & 1 & 0 & 1 \cr
&0 & 1 & 0 & 1 & 0 & 1 & 1 & 0 & 1 & 1 \cr
&0 & 0 & 1 & 0 & 1 & 1 & 0 & 1 & 1 & 1
},$$
where it is convenient to index the columns by the edge set $E_5$ of
$K_5$.  Then the cycles of $R_{10}$ correspond to the even cycles of
$K_5$, and the cocycles of $R_{10}$ to the cuts of $K_5$ and their
complements.  Note that $R_{10}$ is isomorphic to its dual.  Consider
the inequality:
\begin{equation}\label{ineqR10}
  \sum_{e\in F}x_e-\sum_{e\in E_5\setminus F}x_e  \ge -4,
\end{equation}
where $F$ consists of three edges adjacent to a common vertex (e.g.
$F=\{12,13,14\}$). (Thus (\ref{ineqR10}) is of the form (\ref{ineqmet}),
but with a shifted right hand side.) One can verify that
(\ref{ineqR10}) defines a facet of $\CYC(R_{10})$ and that the linear
function in (\ref{ineqR10}) takes three distinct values on the cycles
of $R_{10}$ (namely, 0, 4, and -4).  Therefore, in view of Theorem
\ref{theo2level}, we can conclude that $R_{10}$ is not $\TH_1$-exact.
\end{example}

\subsection{The cycle polytope} \label{cyclepolytope}

As each cycle and cocycle have an even intersection, the following
inequalities are valid for the cycle polytope $\CYC(\M)$:
\begin{equation}\label{ineqmet}
\sum_{e\in F}x_e-\sum_{e\in D\setminus F}x_e \ge 2-|D| \ \text{ for } 
D\in \C^*, F\subseteq D, |F| \text{ odd.}
\end{equation}
Let $\MET(\M)$ be the polyhedron in $\RR^E$ defined by the
inequalities (\ref{ineqmet}) together with $-1\le x_e\le 1$ ($e\in
E$).  We have $\CYC(\M)\subseteq \MET(\M)$. In particular, $\CYC(\M)$
is contained in the hyperplane $x_e=1$ if $e$ is a coloop of $\M$, and
it is contained in the hyperplane $x_e-x_f=0$ if $e,f$ are
coparallel. Thus we may assume without loss of generality that $\M$
has no coloops and no coparallel elements.  We will use the following
known results.

\begin{lemma}\cite[Corollary~4.21]{Barahona-Groetschel} \label{lemfacet}
  Let $\M$ be a binary matroid with no $F_7^*$ minor.  The inequality
  (\ref{ineqmet}) defines a facet of $\CYC(\M)$ if and only if $D$ is
  a chordless cocircuit of $\M$.
\end{lemma}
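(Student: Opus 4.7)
I plan to prove both directions by first characterizing the cycles that are tight at (\ref{ineqmet}) and then analyzing decompositions of the pair $(D,F)$. A direct computation using $\chi^C_e\in\{\pm 1\}$ shows that for any $C\in\C$,
$$
\sum_{e\in F}\chi^C_e-\sum_{e\in D\setminus F}\chi^C_e \;=\; -|D|+2\,|F\triangle(C\cap D)|.
$$
Since $|C\cap D|$ is even and $|F|$ is odd, $|F\triangle(C\cap D)|$ is a positive odd integer; so (\ref{ineqmet}) is valid, and $C$ is tight precisely when $|F\triangle(C\cap D)|=1$, i.e., when $C\cap D$ differs from $F$ in exactly one element.

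For the necessity direction I argue contrapositively. If $D$ is not a cocircuit, write $D=D_1\sqcup D_2$ with $D_1,D_2\in\C^*$ nonempty, set $F_i:=F\cap D_i$ (exactly one, say $F_1$, is of odd size), pick $f\in D_2$ and put $F_2':=F_2\triangle\{f\}$, also odd; summing (\ref{ineqmet}) for $(D_1,F_1)$ and $(D_2,F_2')$ and adding a suitable positive multiple of the box inequality $\pm x_f\le 1$ reproduces (\ref{ineqmet}) for $(D,F)$. If instead $D$ is a cocircuit with a chord $e$, write $D=D_1\triangle D_2$ with $D_1\cap D_2=\{e\}$, let $F_i:=F\cap D_i$ (disjoint, since $e\notin D$), and append $e$ to whichever of $F_1,F_2$ has even size; using $|D_1|+|D_2|=|D|+2$, a term-by-term check (the two $x_e$ contributions cancel) verifies that the two resulting inequalities sum to exactly (\ref{ineqmet}) for $(D,F)$. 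In both cases (\ref{ineqmet}) is a nonnegative combination of strictly weaker valid inequalities, so it is not facet-defining; the $F_7^*$-free hypothesis is used here to guarantee, via the Barahona-Gr{\"o}tschel description of $\CYC(\M)$ as $\MET(\M)$, that such polyhedral combinations genuinely witness non-facetness.

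For the sufficiency direction, assume $D$ is a chordless cocircuit and exhibit $|E|$ affinely independent tight cycles. For each $f\in D$ I would use circuit-cocircuit duality in $\M$ to produce a cycle $C_f$ with $C_f\cap D=F\triangle\{f\}$; the chordlessness of $D$ is crucial to ensure that distinct patterns $F\triangle\{f\}$ really are realized by distinct cycles and do not collapse via an otherwise separating chord. For each $g\in E\setminus D$, take symmetric differences of some $C_f$ with a short cycle through $g$ to obtain a tight cycle containing $g$. Affine independence is then verified by restricting to the $D$-coordinates (where the $C_f$ span the hyperplane cut out by (\ref{ineqmet})) and to the $(E\setminus D)$-coordinates (where the cycles through each $g$ can be varied independently). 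The main obstacle is the chord case of necessity: the parity bookkeeping for $F_1,F_2$ together with the precise cancellation of the $x_e$ terms is where the argument requires care, and invoking the Barahona-Gr{\"o}tschel facet description --- the point where the $F_7^*$-free hypothesis is essential --- is what ultimately closes the classification.
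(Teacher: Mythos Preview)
The paper does not prove this lemma; it is quoted from Barahona--Gr\"otschel. So there is no ``paper's approach'' to compare against, and I will just assess your argument on its own.

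Your necessity direction is essentially correct, but you have misplaced the hypothesis. Once you write (\ref{ineqmet}) for $(D,F)$ as a positive combination of other valid inequalities supported on distinct hyperplanes, non-facetness follows from elementary polyhedral theory (assuming $\CYC(\M)$ is full-dimensional, i.e.\ $\M$ has no coloops or coparallel elements --- a standing assumption you should state). No excluded-minor hypothesis is needed here, and invoking ``$\CYC(\M)=\MET(\M)$'' is both unnecessary and wrong: that equality requires excluding $R_{10}$ and $\M_{K_5}^*$ as well, not just $F_7^*$.

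The $F_7^*$-free hypothesis is needed precisely in the \emph{sufficiency} direction, and your sketch there has a genuine gap. The existence of cycles $C_f$ with $C_f\cap D=F\triangle\{f\}$ holds for \emph{every} cocircuit $D$ (this is \cite[Lemma~4.17]{Barahona-Groetschel}, also used in the proof of Lemma~\ref{lem1}); chordlessness plays no role in realizing those patterns. The hard part is producing, for each $g\in E\setminus D$, two tight cycles that differ at $g$, and your ``symmetric difference with a short cycle through $g$'' does not control the intersection with $D$ and is not a proof. Concretely, in $F_7^*$ every $3$-element cocircuit $D$ (a line of $F_7$) is chordless, yet the inequality (\ref{ineqmet}) is tight at only six of the eight vertices of the simplex $\CYC(F_7^*)$ and hence defines a codimension-$2$ face, not a facet. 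Any correct sufficiency argument must use the $F_7^*$-free hypothesis to rule out exactly this obstruction; your sketch does not.
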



\begin{theorem}
  \cite[Theorem~4.22]{Barahona-Groetschel} \label{thm:classification}
  For a binary matroid $\M$, $\CYC(\M)=\MET(\M)$
if and only if $\M$ has no $F_7^*$, $R_{10}$ or $\M_{K_5}^*$ minors.
\end{theorem}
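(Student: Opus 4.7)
My plan is to prove the two directions of Theorem~\ref{thm:classification} separately.

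\emph{Necessity.} I would first verify by direct computation that each of the three forbidden minors $\N\in\{F_7^*, R_{10}, \M_{K_5}^*\}$ satisfies $\CYC(\N)\subsetneq\MET(\N)$. For $R_{10}$, the inequality (\ref{ineqR10}) in Example~\ref{exR10} is a facet of $\CYC(R_{10})$ not of the form (\ref{ineqmet}), since the cocircuits of $R_{10}$ all have size four while this inequality involves six edges. For $\M_{K_5}^*$, whose cycle polytope equals $\CUT(K_5)$, the pentagonal facet $\sum_{e\in E_5}x_e\ge -2$ exhibited in Example~\ref{exK5} cannot be written in the form (\ref{ineqmet}), because no cocycle of $\M_{K_5}^*$ covers all ten edges with right-hand side $-2$. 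For $F_7^*$ the verification is a finite explicit calculation on a seven-element matroid. Next, I would show that the identity $\CYC(\M)=\MET(\M)$ is inherited by minors: deletion $\M\backslash e$ realizes $\CYC(\M\backslash e)$ as the face of $\CYC(\M)$ cut by an appropriate hyperplane $x_e=\pm 1$, and the corresponding face of $\MET(\M)$ (cocircuit structure controlled by Lemma~\ref{lemfacet}) is $\MET(\M\backslash e)$; contraction is dual. Hence a strict inclusion for a forbidden minor lifts to a strict inclusion for $\M$.

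\emph{Sufficiency.} For the converse I would invoke a Seymour-style decomposition theorem for the class of binary matroids avoiding these three minors, asserting that every such matroid can be built via $1$-, $2$-, and $3$-sums from a short list of building blocks: graphic matroids of $K_5$-minor-free graphs, certain cographic matroids, and the Fano matroid $F_7$ itself. For each building block I would verify $\CYC=\MET$ directly: for graphic matroids of $K_5$-minor-free graphs by the classical Barahona--Mahjoub theorem (and its cycle-space analogue), and for $F_7$ by a small finite computation. The inductive step is then to prove that the identity is preserved under each $k$-sum: if $\M=\M_1\oplus_k\M_2$ with both summands satisfying the identity, any point $\yy\in\MET(\M)$ should split into compatible pieces $(\yy_1,\yy_2)\in\MET(\M_1)\times\MET(\M_2)$ that by induction lie in $\CYC(\M_i)$, and the resulting convex combinations on each side can be reassembled into a convex combination of cycles of $\M$, using that cycles of $\M$ are pairs of cycles of $\M_1,\M_2$ agreeing on the shared set.

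The step I expect to be the main obstacle is the preservation of $\CYC=\MET$ under $3$-sums: the gluing requires a delicate parity analysis on the three shared elements together with a careful redistribution of convex-combination weights to ensure that every parity inequality of $\MET(\M)$ is simultaneously respected. Identifying the correct decomposition theorem is itself nontrivial, since one needs a variant tailored to the non-regular case where $F_7$ is allowed but $F_7^*$ is forbidden, rather than the cleaner regular-matroid setting.
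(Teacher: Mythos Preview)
The paper does not prove Theorem~\ref{thm:classification} at all: it is quoted verbatim from Barahona and Gr\"otschel~\cite[Theorem~4.22]{Barahona-Groetschel} and used as a black box in the proofs of Theorem~\ref{theochar}, Corollary~\ref{corC1}, and Corollary~\ref{cornoC9}. There is therefore no ``paper's own proof'' to compare your proposal against.

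That said, your sketch is a reasonable outline of how the original Barahona--Gr\"otschel argument actually proceeds: they too establish necessity by exhibiting non-metric facets for each of the three forbidden minors and showing the property is minor-closed, and they obtain sufficiency via a decomposition into $1$-, $2$-, and $3$-sums. Your identification of the $3$-sum step as the delicate one is accurate. One caution on the necessity side: your proposed treatment of minors is slightly glib. Deletion of $e$ corresponds to the face $x_e=1$ of $\CYC(\M)$ only when $e$ is not a loop, and contraction is not simply ``dual'' to deletion at the level of these polytopes---contraction corresponds to projection, and one must check that projecting $\MET(\M)$ onto $\RR^{E\setminus\{e\}}$ yields exactly $\MET(\M/e)$, which requires an argument about how cocircuits behave under contraction. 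This is handled in~\cite{Barahona-Groetschel}, but it is not a one-line duality.
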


Recall that $I\M$ is $\TH_1$-exact if $\CYC(\M) = \TH_1(I\M)$.

\begin{lemma}\label{lem1}
  Assume $\M$ has no $F_7^*$ minor. If $I\M$ is $\TH_1$-exact then
  $\M$ does not have any chordless cocircuit of length at least five.
\end{lemma}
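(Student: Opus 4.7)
The plan is to prove the contrapositive: assume that $D$ is a chordless cocircuit of $\M$ with $|D|\ge 5$, and deduce that $I\M$ is not $\TH_1$-exact. By Lemma~\ref{lemfacet}, the hypothesis that $\M$ has no $F_7^*$ minor ensures that inequality~(\ref{ineqmet}) defines a facet of $\CYC(\M)$ for every odd-size $F\subseteq D$. I would fix $F=\{e_0\}$ for an arbitrary $e_0\in D$, giving the facet-defining linear form $\ell(\xx):=x_{e_0}-\sum_{e\in D\setminus\{e_0\}} x_e$ with minimum value $2-|D|$ on $\CYC(\M)$. By Theorem~\ref{theo2level} it then suffices to exhibit three cycles of $\M$ on which $\ell$ takes three distinct values.

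For any cycle $C$ a direct computation gives $\ell(\chi^C)=2-|D|+2(b-a)$, where $a:=|C\cap F|$ and $b:=|C\cap(D\setminus F)|$; note that $a+b=|C\cap D|$ is necessarily even because $D\in \C^*$. I plan to realize the three shapes $(a,b)=(0,0),(0,2),(0,4)$, yielding $\ell(\chi^C)=2-|D|,\ 6-|D|,\ 10-|D|$, which are three distinct values. The corresponding choices $Y:=C\cap D$ of sizes $0$, $2$, $4$ lying inside $D\setminus F$ are available because $|D\setminus F|=|D|-1\ge 4$.

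The one nontrivial ingredient is to verify that every even-size subset $Y\subseteq D$ is realized as $C\cap D$ for some cycle $C\in \C$. Viewing $\C$ and $\C^*$ as orthogonal $\GF(2)$-subspaces of $\GF(2)^E$, the restriction map $\C\to \GF(2)^D$ sending $C\mapsto C\cap D$ is $\GF(2)$-linear, and the orthogonal complement of its image inside $\GF(2)^D$ is exactly the set of cocycles of $\M$ that are contained in $D$. Because $D$ is a cocircuit, i.e.\ a \emph{minimal} nonempty cocycle, the only cocycles contained in $D$ are $\emptyset$ and $D$ itself. Hence the image is the codimension-one hyperplane of even-cardinality subsets of $D$, and the three target shapes are indeed realized by cycles.

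I anticipate no serious obstacle: the computation of $\ell(\chi^C)$ is routine, and the surjectivity claim above is a short duality argument that uses only the minimality built into the definition of a cocircuit and the self-duality of binary matroids. Combined with Lemma~\ref{lemfacet} (the place where the no-$F_7^*$-minor hypothesis is actually consumed, to guarantee the facet-defining property) and with Theorem~\ref{theo2level}, the three distinct values of $\ell$ force the cycle vectors not to be $2$-level, hence $I\M$ is not $\TH_1$-exact, completing the contrapositive.
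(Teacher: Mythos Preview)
Your proof is correct and follows essentially the same approach as the paper: both use Lemma~\ref{lemfacet} to get a facet, then exhibit three cycles on which the facet linear form takes the values $2-|D|,\,6-|D|,\,10-|D|$, and conclude via Theorem~\ref{theo2level}. The only difference is that the paper cites \cite[Lemma~4.17]{Barahona-Groetschel} for the fact that every even subset of a cocircuit $D$ is realized as $C\cap D$ for some cycle $C$, whereas you supply a clean self-contained duality argument for this (which is a small bonus).
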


\begin{proof}
  Suppose $D=\{e_1,\ldots,e_k\}$ is a chordless cocircuit of $\M$ with
  $k=|D|\ge 5$.
By Lemma~\ref{lemfacet}, the inequality 
\[
x_{e_1}-x_{e_2}-\dots -x_{e_k} \ge 2-k
\] 
defines a facet of $\CYC(\M)$.  We now use the following claim
\cite[Lemma~4.17]{Barahona-Groetschel}: For each even subset
$F\subseteq D$, there exists a cycle $C\in \C$ for which $C\cap
D=F$. Thus we can find three cycles whose intersections with $D$ are
respectively $\emptyset$, $\{e_2,e_3\}$ and $\{e_2,e_3,e_4,e_5\}$.
Then the linear form $x_{e_1}-x_{e_2}-\dots -x_{e_k}$ evaluated at
each of these three cycles takes the values $2-k,6-k,10-k$. In view of
Theorem \ref{theo2level} we can thus conclude that $I\M$ is not
$\TH_1$-exact.
\end{proof}

\begin{theorem}\label{theochar}
  Assume $\M$ has no $F_7^*$, $R_{10}$ or $\M_{K_5}^*$ minors.  Then
  $I\M$ is $\TH_1$-exact if and only if $\M$ does not have any
  chordless cocircuit of length at least 5.
\end{theorem}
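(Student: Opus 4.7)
\noindent
\emph{Proof plan.}
The ``only if'' direction is given by Lemma~\ref{lem1}. For the converse, I plan to invoke Theorem~\ref{theo2level}: it suffices to show that the finite set of cycle vectors $S := \{\chi^C : C \in \C\}$ is $2$-level, i.e., that every facet-defining linear form of $\CYC(\M) = \conv(S)$ takes at most two distinct values on $S$. Following the discussion preceding Lemma~\ref{lemfacet}, we may assume that $\M$ has no coloops and no coparallel elements, so every cocircuit of $\M$ has size at least $3$.

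The first step is to describe the facets of $\CYC(\M)$. By Theorem~\ref{thm:classification}, the assumption on the forbidden minors $F_7^*$, $R_{10}$, $\M_{K_5}^*$ gives $\CYC(\M) = \MET(\M)$, so every facet-defining inequality is either a box constraint $\pm x_e \leq 1$ (whose linear form trivially takes only the values $\pm 1$ on $S$) or an inequality of the form (\ref{ineqmet}). By Lemma~\ref{lemfacet} (via the absence of an $F_7^*$ minor), the facets of the latter type are in bijection with pairs $(D,F)$ where $D$ is a chordless cocircuit of $\M$ and $F \subseteq D$ has odd cardinality. Combining the hypothesis that there is no chordless cocircuit of length $\geq 5$ with the standing reduction forces $|D| \in \{3,4\}$.

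The second step is a direct computation on $S$. For a cycle $C$, set $m := |D \cap C|$ (which is even, since $D$ is a cocycle and $C$ a cycle) and $j := |F \cap C|$. Using $\chi^C_e = -1$ if $e \in C$ and $+1$ otherwise, a short calculation yields
\begin{equation*}
\ell(\chi^C) \;=\; (2|F| - |D|) + 2(m - 2j).
\end{equation*}
By \cite[Lemma~4.17]{Barahona-Groetschel}, every even subset of $D$ arises as $D \cap C$ for some $C \in \C$, so the admissible pairs $(m,j)$ are exactly those with $m$ even, $0 \leq m \leq |D|$, and $\max(0,\, |F| + m - |D|) \leq j \leq \min(|F|,\, m)$. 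A finite case analysis over $|D| \in \{3,4\}$ and $|F|$ odd then shows that $m - 2j$ attains exactly two distinct admissible values in each case, and hence $\ell(\chi^C) \in \{2 - |D|,\, 6 - |D|\}$ on $S$. Applying Theorem~\ref{theo2level} completes the proof.

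The main obstacle is the bookkeeping in the $|D| = 4$ case: one must apply both the upper bound $j \leq \min(|F|,\, m)$ and the lower bound $j \geq \max(0,\, |F| + m - |D|)$ to discard parameter tuples that would suggest a spurious third value of $\ell$. For instance, when $|F| = 1$ and $m = 4$, the lower bound forces $j \geq 1$, so $m - 2j = 4$ (which would give $\ell = 6$) is not admissible; analogous bookkeeping rules out $\ell = -6$ when $|F|=3$ and $|D|=4$. Once this is handled correctly, only the two values $2 - |D|$ and $6 - |D|$ remain.
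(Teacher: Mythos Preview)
Your proposal is correct and follows essentially the same approach as the paper's proof: both directions invoke Lemma~\ref{lem1} and, for the ``if'' part, combine Theorem~\ref{thm:classification} with Theorem~\ref{theo2level} by verifying that the facet-defining linear forms of type~(\ref{ineqmet}) with $|D|\le 4$ take only two values on cycle vectors. The paper leaves this last verification as a one-line remark, whereas you spell it out via the explicit formula $\ell(\chi^C)=(2|F|-|D|)+2(m-2j)$ and a case analysis; note that the appeal to \cite[Lemma~4.17]{Barahona-Groetschel} is not strictly needed here, since for $2$-levelness you only require that the \emph{achievable} values of $m-2j$ lie among at most two, and the combinatorial bounds on $j$ already guarantee this.
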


\begin{proof}
  Lemma \ref{lem1} gives the `only if' part. For the `if' part, it
  suffices to verify that, if $D$ is a cocircuit of length at most 4
  and $F$ is an odd subset of $D$, then the linear form $\sum_{e \in
    F} x_e - \sum_{e \in D \setminus F} x_e $ takes two values when
  evaluated at cycles of $\M$, and then to apply Theorems
  \ref{thm:classification} and \ref{theo2level}.
\end{proof}

\begin{corollary}\label{corgraphic}
  The cycle ideal of a graphic matroid $\M_G$ is $\TH_1$-exact if and
  only if $G$ has no chordless cut of size at least 5.
\end{corollary}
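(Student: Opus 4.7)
The plan is to deduce this corollary almost immediately from Theorem~\ref{theochar}, by verifying that a graphic matroid $\M_G$ satisfies the hypotheses of that theorem and by translating the matroid condition on chordless cocircuits into a graph-theoretic statement about chordless cuts.

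First I would check the minor hypothesis. By Tutte's celebrated excluded-minor characterization, a binary matroid is graphic if and only if it has no minor isomorphic to $F_7$, $F_7^*$, $\M_{K_5}^*$, or $\M_{K_{3,3}}^*$. In particular, $\M_G$ cannot contain $F_7^*$ or $\M_{K_5}^*$ as a minor. For $R_{10}$, it is a standard fact (Seymour's decomposition theorem, or the direct observation that $R_{10}$ is self-dual and hence being graphic would force it to be cographic, contradicting the presence of its ten ``odd-circuit'' structure) that $R_{10}$ is neither graphic nor cographic. Hence $\M_G$ has no $R_{10}$ minor either, so Theorem~\ref{theochar} applies to $\M=\M_G$.

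Next I would translate the resulting statement. By definition of the graphic matroid, the set of cocycles $\C^*$ of $\M_G$ is exactly the collection $\D_G$ of cuts of $G$, and the cocircuits of $\M_G$ are the inclusion-minimal non-empty cuts, i.e., the bonds of $G$. The notion of ``chord'' in the statement of the corollary is the one defined earlier in the paper for cocircuits of a binary matroid, so a ``chordless cut'' in $G$ is precisely a chordless cocircuit of $\M_G$. With this dictionary in hand, Theorem~\ref{theochar} says that $I\M_G$ is $\TH_1$-exact if and only if $\M_G$ has no chordless cocircuit of length at least $5$, which is exactly the claim that $G$ has no chordless cut of size at least $5$.

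There is really no substantial obstacle: the only points to verify are (i) the classical minor-theoretic facts that graphic matroids avoid $F_7^*$, $\M_{K_5}^*$, and $R_{10}$, and (ii) the identification between cocircuits of $\M_G$ and bonds of $G$. Both are standard, so the corollary is a direct specialization of Theorem~\ref{theochar}. If I wanted to be completely self-contained I would cite Tutte's theorem for step (i), and otherwise simply invoke Theorem~\ref{theochar} after remarking on the dictionary between $\M_G$ and $G$.
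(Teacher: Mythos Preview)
Your proposal is correct and follows essentially the same approach as the paper's proof, which simply states that the result follows directly from Theorem~\ref{theochar} since graphic matroids have no $F_7^*$, $R_{10}$, or $\M_{K_5}^*$ minors. Your version is just a more detailed unpacking of that one-line justification.
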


\begin{proof}
  Directly from Theorem \ref{theochar} since graphic matroids do not
  have $F_7^*$, $R_{10}$ or $\M_{K_5}^*$ minors.
\end{proof}

\begin{lemma}\label{lemdeletion}
  If $I\M$ is $\TH_k$-exact, then the cycle ideal of any deletion
  minor of $\M$ is also $\TH_k$-exact.
\end{lemma}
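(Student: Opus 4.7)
My plan is to reduce the lemma to the case of deleting a single element, since a general deletion minor $\M \setminus X$ arises by iterating single-element deletions. So fix $e \in E$ and set $W := E \setminus \{e\}$. The inclusion $\CYC(\M \setminus e) \subseteq \TH_k(I(\M \setminus e))$ holds automatically because $\TH_k$ is always a relaxation of the convex hull of the variety. For the reverse inclusion, I would take $y \in \TH_k(I(\M\setminus e))$ and show that its natural lift $\tilde{y} \in \RR^E$, defined by $\tilde{y}_e = 1$ and $\tilde{y}_f = y_f$ for $f \in W$, already belongs to $\TH_k(I\M)$. Using the $\TH_k$-exactness of $I\M$, this will place $\tilde{y}$ in $\CYC(\M)$; and since the cycle vectors are $\pm 1$-valued, the constraint $\tilde{y}_e = 1$ then forces $\tilde{y} \in \conv\{\chi^C : e \notin C\}$, because only cycle vectors with $e$-th entry $+1$ can appear with positive coefficient in a convex combination whose $e$-th coordinate equals $+1$. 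Projecting onto $W$ gives $y \in \CYC(\M \setminus e)$.

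The core of the argument is the lifting claim, which relies on substituting $x_e \mapsto 1$ into sum-of-squares certificates. Given any linear $F \in \RR E$ that is $k$-sos modulo $I\M$, write $F = \sum_i G_i^2 + H$ with $\deg G_i \le k$ and $H \in I\M$. Setting $x_e = 1$ yields a decomposition $\bar{F} = \sum_i \bar{G}_i^2 + \bar{H}$ in $\RR W$, where $\bar{F}$ remains linear and each $\bar{G}_i$ has degree at most $k$. The key observation is that $\bar{H} \in I(\M \setminus e)$: indeed $H$ vanishes on every cycle vector $\chi^C$ of $\M$, and for cycles with $e \notin C$ we have $\chi^C_e = +1$, so $\bar{H}(\chi^C|_W) = H(\chi^C) = 0$; since $I(\M \setminus e)$ is by definition the vanishing ideal of these cycle vectors (a real radical ideal), $\bar{H}$ belongs to it. Hence $\bar{F}$ is $k$-sos modulo $I(\M \setminus e)$, so from $y \in \TH_k(I(\M \setminus e))$ we obtain $F(\tilde{y}) = \bar{F}(y) \ge 0$. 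Since this holds for every $F$ that is $k$-sos modulo $I\M$, we conclude $\tilde{y} \in \TH_k(I\M)$.

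I do not anticipate any serious obstacle. The only subtlety worth flagging is the identification $\CYC(\M) \cap \{x_e = 1\} = \conv\{\chi^C : e \notin C\}$, which uses the $\pm 1$-structure of cycle vectors as noted above. Everything else is a direct unwinding of definitions, together with the already-established projection inclusion~(\ref{incproj}) playing no role here—the crucial direction in this argument goes by lifting rather than projecting.
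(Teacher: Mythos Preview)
Your proposal is correct and follows essentially the same approach as the paper's proof: reduce to deleting a single element, lift a point of $\TH_k(I(\M\setminus e))$ by setting the $e$-coordinate to $1$, use the substitution $x_e\mapsto 1$ to transport a $k$-sos certificate modulo $I\M$ to one modulo $I(\M\setminus e)$, invoke $\TH_k$-exactness of $I\M$, and finally use the $\pm 1$ structure of cycle vectors to peel off cycles through $e$. The only cosmetic difference is notation.
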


\begin{proof}
  Say $\M' = \M\backslash e_1$ is a deletion minor of $\M$, where
  $E=\{e_1,\ldots,e_m\}$ and $E'=E\setminus\{e_1\}$. Take $\xx'\in
  \TH_k(I\M')$; we show that $\xx'\in\CYC(\M')$. For this extend
  $\xx'$ to $\xx\in \RR^E$ by setting $x_{e_1}:=1$. We verify that
  $\xx \in \TH_k(I\M)$.

  For this consider a linear polynomial $f\in \RR E$ of the form
  $f=s+q$ where $s$ is a sos of degree at most $2k$ and $q\in I\M$.
  Define the polynomials $f',s',q'\in \RR E'$ by
  $f'(x_{e_2},\ldots,x_{e_m})=f(1,x_{e_2},\ldots,x_{e_m})$; similarly
  for $q',s'$.  Obviously $s'$ is sos with degree at most $2k$. Since
  $q$ vanishes on $\{ \chi^C \,:\, C \in \C \}$, it vanishes on all
  $\chi^C$, $C \in \C$, with $x_{e_1} = 1$. This last fact is
  equivalent to saying that $q'$ vanishes on $\{ \chi^C \,:\, C \in
  \C' \}$.
Therefore, $f'$ is $k$-sos modulo $I\M'$ and so $f'(\xx')\ge 0$ as
$\xx'\in \TH_k(I\M')$. In particular, $f(\xx)=f'(\xx')\ge 0$ and 
$\xx\in \TH_k(I\M)=\CYC(\M)$.

  Thus $\xx$ is a convex combination of $\pm 1$-incidence vectors of
  cycles of $\M$; as $x_{e_1}=1$ no cycle in the combination uses
  $e_1$, which thus gives a decomposition of $\xx'$ as a convex
  combination of cycles of $\M'$.
\end{proof}

\begin{remark}
  On the other hand, the property of being $\TH_1$-exact is {\em not}
  preserved under taking contraction minors.  Indeed, every binary
  matroid can be realized as a contraction minor of some dual binary
  projective space $\PPP_r^*$ (see \cite{GT89b}). Now we observed in
  Example \ref{exPr} that the cycle ideal of $\PPP_r^*$ is
  $\TH_1$-exact, while $I\M$ is not always $\TH_1$-exact.
\end{remark}

See Section 5 for examples of cographic matroids whose cycle ideal is
$\TH_2$-exact while they have a contraction  minor whose cycle ideal is
not $\TH_k$-exact for large $k$ (this is the case for wheels,
cf. Corollary \ref{cordiameter}).




We now characterize the $\TH_1$-exact cographic matroids.  We begin
with a lemma relating graph and matroid minors involving $K_5$.

\begin{lemma}\label{lemK5}
  The cographic matroid $\M_G^*$ of a graph $G$ has a $\M_{K_5}^*$
  minor if and only if $K_5$ is a contraction minor of $G$.
\end{lemma}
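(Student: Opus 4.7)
The plan is to pass through matroid duality and then Whitney's $2$-isomorphism theorem. First, using the standard identity $(\M/Y\setminus X)^* = \M^*\setminus Y/X$, I would observe that $\M_G^*$ has $\M_{K_5}^*$ as a minor if and only if $\M_G$ has $\M_{K_5}$ as a minor, i.e. there exist disjoint $X, Y \subseteq E(G)$ with $\M_G \setminus X / Y = \M_{K_5}$.

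For the forward direction, I would use that matroid deletion and contraction of $\M_G$ mirror the corresponding graph operations, so $\M_G\setminus X/Y = \M_{G'}$ for $G' := G\setminus X/Y$. Since $\M_{K_5}$ has no loops and no parallel elements, $G'$ must be simple and loopless, and since $K_5$ is $3$-connected, Whitney's $2$-isomorphism theorem upgrades the matroid equality $\M_{G'} = \M_{K_5}$ to a graph isomorphism $G' \cong K_5$ (up to isolated vertices). The five non-isolated vertices of $G'$ then correspond to five disjoint connected subsets $V_1, \dots, V_5 \subseteq V(G)$ (the components of the subgraph $(V(G),Y)$ that survive into $G'$), with at least one $G$-edge between each pair $V_i, V_j$ (namely the one representing the edge $\{i,j\}\in E(K_5)$). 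These are branch sets realizing $K_5$ as a contraction minor of $G$.

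For the reverse direction, given branch sets $V_1, \dots, V_5$, I would take $Y$ to be the union of spanning trees of the $G[V_i]$ and $X$ to be all edges outside $Y$ apart from one chosen representative edge $e_{ij}\in E(G)$ between each pair $V_i, V_j$. Then $G\setminus X/Y$ has the components of $(V(G),Y)$ as its vertex set, and its ten edges $\{e_{ij}\}$ join the contracted $V_i$-vertices in exactly the pattern of $K_5$. Hence $\M_G\setminus X/Y = \M_{K_5}$, and by the duality above $\M_G^*$ has $\M_{K_5}^*$ as a minor.

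The main obstacle is the forward direction, specifically lifting the matroid equality $\M_{G'} = \M_{K_5}$ to a graph isomorphism $G' \cong K_5$. This is precisely where $3$-connectivity of $K_5$ is essential: Whitney's theorem in general yields only $2$-isomorphism, and one would have to worry about Whitney twists at $2$-separations; for $3$-connected targets such as $K_5$ these degenerate to genuine isomorphism, which is what makes the branch-set extraction in the forward step clean.
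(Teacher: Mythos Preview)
Your proposal is correct and follows essentially the same approach as the paper: both arguments reduce to the graphic side via matroid duality, invoke Whitney's $2$-isomorphism theorem, and exploit the $3$-connectivity of $K_5$ to turn $2$-isomorphism into genuine isomorphism. The paper's proof is terser---it simply says ``$K_5$ is $2$-isomorphic to a minor $H$ of $G$, hence $H\cong K_5$, hence $K_5$ is a minor and thus a contraction minor of $G$''---while you spell out the branch-set extraction and the explicit choice of $X,Y$ in the reverse direction; but the substance is the same.
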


\begin{proof}
  The `if part' is obvious since if $K_5$ is a contraction minor of
  $G$, then $\M_{K_5}^*$ is a deletion minor of $\M_G^*$.  Conversely
  assume that $\M_{K_5}^*$ is a minor of $\M_G^*$.  By Whitney's
  2-isomorphism theorem (cf. \cite{Oxley}), $K_5$ is 2-isomorphic to a
  minor $H$ of $G$; but then $H$ must be isomorphic to $K_5$ as the
  the only graph 2-isomorphic to $K_5$ is $K_5$ itself.  Hence $K_5$
  is a minor of $G$, which implies that $K_5$ is also a contraction
  minor of $G$.
\end{proof}

\begin{corollary}\label{corcographic}
  The cycle ideal of a cographic matroid $\M_G^*$ is $\TH_1$-exact if
  and only if $M_G^*$ has no $M_{K_5}^*$ minor and no chordless
  cocircuit of length at least 5.
\end{corollary}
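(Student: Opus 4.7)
The plan is to deduce this corollary directly from Theorem \ref{theochar} applied to $\M=\M_G^*$, handling the two forbidden minors $F_7^*$ and $R_{10}$ via structural properties of cographic matroids and handling the $\M_{K_5}^*$ minor separately via Lemmas \ref{lemK5} and \ref{lemdeletion} together with Example \ref{exK5}.

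For the \emph{if} direction, I would first observe the classical matroid fact that the class of cographic matroids is closed under taking minors, while neither $F_7^*$ nor $R_{10}$ is cographic (equivalently, $F_7$ is not graphic, and $R_{10}$ is self-dual and not graphic). Consequently, $\M_G^*$ automatically has no $F_7^*$ minor and no $R_{10}$ minor. Combined with the hypotheses that $\M_G^*$ has no $\M_{K_5}^*$ minor and no chordless cocircuit of length at least $5$, Theorem \ref{theochar} immediately yields that $I\M_G^*$ is $\TH_1$-exact.

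For the \emph{only if} direction, assume $I\M_G^*$ is $\TH_1$-exact. Since cographic matroids have no $F_7^*$ minor (by the same observation as above), Lemma \ref{lem1} applies and gives that $\M_G^*$ has no chordless cocircuit of length at least $5$. To rule out an $\M_{K_5}^*$ minor, suppose for contradiction that $\M_G^*$ has $\M_{K_5}^*$ as a minor. Lemma \ref{lemK5} then says $K_5$ arises as a contraction minor of $G$, which dually means $\M_{K_5}^*$ arises as a \emph{deletion} minor of $\M_G^*$. Lemma \ref{lemdeletion} would then force $I\M_{K_5}^* = IK_5$ to be $\TH_1$-exact, contradicting Example \ref{exK5}. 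Hence $\M_G^*$ has no $\M_{K_5}^*$ minor.

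The only non-routine ingredient is the structural matroid fact that $F_7^*$ and $R_{10}$ are not cographic; I would cite this from a standard matroid theory reference (e.g., Oxley) rather than reprove it. Everything else is a bookkeeping assembly of results already available in the excerpt, and the main subtlety to flag is the dualization in Lemma \ref{lemK5}, namely that \emph{contraction} minors of $G$ correspond to \emph{deletion} minors of $\M_G^*$, which is exactly what makes Lemma \ref{lemdeletion} applicable.
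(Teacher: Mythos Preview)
Your proposal is correct and follows essentially the same argument as the paper: both use the fact that cographic matroids automatically avoid $F_7^*$ and $R_{10}$, invoke Theorem~\ref{theochar} for the \emph{if} direction, and for the \emph{only if} direction combine Lemma~\ref{lemK5}, Lemma~\ref{lemdeletion}, and Example~\ref{exK5} to exclude an $\M_{K_5}^*$ minor. The only cosmetic difference is that you invoke Lemma~\ref{lem1} directly for the chordless-cocircuit condition, whereas the paper obtains it via Theorem~\ref{theochar} after first ruling out the $\M_{K_5}^*$ minor.
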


\begin{proof}
  Note that $\M_G^*$ contains no $F_7^*$ or $R_{10}$ minor. Hence in
  view of Theorem \ref{theochar}, it suffices to show that if $\M_G^*$
  is $\TH_1$-exact then $\M_G^*$ has no $M_{K_5}^*$ minor. So assume
  that $\M_G^*$ is $\TH_1$-exact.  As $\M_{K_5}^*$ is not
  $\TH_1$-exact (cf. Example \ref{exK5}), Lemma \ref{lemdeletion}
  implies that $\M_{K_5}^*$ is not a deletion minor of
  $\M_{G}^*$. Hence $K_5$ is not a contraction minor of $G$ which,
  by Lemma \ref{lemK5}, implies that $\M_{K_5}^*$ is not a minor of
  $\M_{G}^*$.
\end{proof}

Reformulating this last result we arrive at a characterization of
`cut-perfect' graphs, answering Problem 8.4 in [19].

\begin{corollary}\label{corcutperfect}
  The cut ideal of a graph $G$ is $\TH_1$-exact if and only if $G$ has
  no $K_5$ minor and no chordless circuit of length at least 5.
\end{corollary}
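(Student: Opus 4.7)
The plan is to deduce Corollary~\ref{corcutperfect} directly from Corollary~\ref{corcographic} by translating each matroid-theoretic condition into its graph-theoretic counterpart. The starting point is the identification $IG = I\M_G^*$, which holds because the cycles of the cographic matroid $\M_G^*$ are precisely the cuts of $G$. Thus, by Corollary~\ref{corcographic}, $IG$ is $\TH_1$-exact if and only if $\M_G^*$ has no $\M_{K_5}^*$ minor and no chordless cocircuit of length at least $5$, and it remains to rewrite these two conditions purely in terms of $G$.

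For the minor condition, I would invoke Lemma~\ref{lemK5}, which gives the equivalence between $\M_G^*$ having an $\M_{K_5}^*$ minor and $K_5$ being a \emph{contraction} minor of $G$. To upgrade this to \emph{general} minors I would re-use the final line of the proof of Lemma~\ref{lemK5}: since $K_5$ is $3$-connected and the only graph Whitney-$2$-isomorphic to $K_5$ is $K_5$ itself, any (general) $K_5$-minor of $G$ can already be produced by contractions alone. Hence the condition "$\M_G^*$ has no $\M_{K_5}^*$ minor" is equivalent to "$G$ has no $K_5$ minor".

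For the cocircuit condition, I would observe that the cocircuits of $\M_G^*$ are the circuits of the graphic matroid $\M_G$, and these coincide with the graph-theoretic circuits of $G$. It remains to match the two notions of chord: a matroid chord of a circuit $C$ is, by definition, an element $e \notin C$ together with circuits $C_1,C_2$ satisfying $C = C_1 \Delta C_2$ and $C_1 \cap C_2 = \{e\}$. In graph terms, such a pair $(C_1,C_2)$ decomposes the theta-subgraph $C \cup \{e\}$ into the two arcs of $C$ meeting at the endpoints of $e$, so $e$ joins two non-adjacent vertices of $C$ — that is, $e$ is an ordinary graph chord. Conversely, any graph chord $e$ of $C$ produces the matroid-theoretic decomposition via the same theta-subgraph. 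Thus chordless cocircuits of $\M_G^*$ of length $\ge 5$ are exactly chordless circuits of $G$ of length $\ge 5$.

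Combining these two translations with Corollary~\ref{corcographic} yields the statement. Since all substantive work (notably Theorem~\ref{theochar}, Lemma~\ref{lemK5}, and Corollary~\ref{corcographic}) has already been done, there is no real obstacle: the proof is essentially a bookkeeping exercise transporting the matroid characterization to the graphic language via the cographic/dual correspondence.
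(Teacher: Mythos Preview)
Your proposal is correct and matches the paper's approach exactly: the paper introduces Corollary~\ref{corcutperfect} with the words ``Reformulating this last result'' (referring to Corollary~\ref{corcographic}) and gives no further proof, so your translation of the two matroid conditions into graph language is precisely the intended argument. One small point: your justification for ``$K_5$ minor $\Leftrightarrow$ $K_5$ contraction minor'' conflates two steps --- the Whitney $2$-isomorphism argument in Lemma~\ref{lemK5} is used to pass from a \emph{matroid} minor to a \emph{graph} minor, whereas the implication ``$K_5$ minor $\Rightarrow$ $K_5$ contraction minor'' is the separate (standard) graph-theoretic fact asserted in the last line of that proof; but this does not affect the validity of your argument.
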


In \cite[Theorem~3.2]{Sullivant}, Sullivant obtains the same
characterization for {\em compressed} cut polytopes; namely he proves
that $\CUT(G)$ is compressed if and only if $G$ has no $K_5$ minor and
no chordless cycles of length at least 5. See \cite[Section 4]{GPT}
for comments on the connection between compressed polytopes and
$\TH_1$-exactness.

\section{The theta bodies for cut ideals of
  circuits}\label{seccircuit} 



In this section we determine the exact order $k$ for which the cut
ideal $IC_n$ of a circuit $C_n$ with $n$ edges is $\TH_k$-exact.  We
also obtain some results on graphs whose cut ideal is $\TH_2$-exact.
We begin with a result determining when the inequalities (\ref{ineqmet})
associated to circuits of $G$ are valid for $\TH_k(IG)$.

\begin{theorem}\label{theoC0}
  Let $C$ be a circuit of a graph $G$, let $e\in C$, and let $k$ be an
  integer such that $4k\ge |C|$. Then the inequality
\begin{equation}\label{eqC}
x_e -\sum_{f\in C\setminus \{e\}} x_f \ge 2-|C|
\end{equation}
is valid for $\TH_k(IG)$.
\end{theorem}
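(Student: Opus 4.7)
Plan: I will prove, by strong induction on $n := |C|$, that
\[
P_C \;:=\; (n-2) + x_e - \sum_{f \in C \setminus \{e\}} x_f
\]
is a sum of squares of polynomials of degree at most $\lceil n/4 \rceil$ modulo $IG$; by the definition (\ref{THk}), this implies that (\ref{eqC}) is valid for $\TH_k(IG)$ whenever $k \ge \lceil n/4 \rceil$. The central algebraic tool is the \emph{short-circuit identity}: if $s_1,\ldots,s_r$ are ring elements satisfying $s_j^2 \equiv 1$ and $s_1 s_2 \cdots s_r \equiv 1$ modulo an ideal $J$, and $r \in \{2,3,4\}$, then $q := (r-2) + s_1 - s_2 - \cdots - s_r$ obeys $q^2 \equiv 4q \pmod J$, so $q \equiv (q/2)^2 \pmod J$ is a single square. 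For $r=4$ this is verified by expanding $q^2$ and invoking the pairings $s_1 s_2 \equiv s_3 s_4$, $s_1 s_3 \equiv s_2 s_4$, $s_1 s_4 \equiv s_2 s_3$, all consequences of $s_j^2 \equiv 1$ and $\prod s_j \equiv 1$; the cases $r \le 3$ are easier.

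For the base case $n \le 4$, Lemma~\ref{lemH} applied to the cocycle $C$ of $\M_G^*$ gives $\prod_{f \in C} x_f \equiv 1 \pmod{IG}$, and $x_f^2 - 1 \in \H \subseteq IG$ by (\ref{setH}) and Theorem~\ref{theobaseM}. Applying the short-circuit identity with $r = n$, $s_1 = x_e$, and $s_2,\ldots,s_n$ the remaining $x_f$, we obtain $P_C \equiv (P_C/2)^2 \pmod{IG}$, a $1$-sos representation, matching $\lceil n/4 \rceil = 1$.

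For the inductive step $n \ge 5$ with $k := \lceil n/4 \rceil \ge 2$, I use $4k \ge n$ to partition $C = G_1 \sqcup G_2 \sqcup G_3 \sqcup G_4$ with $e \in G_1$ and $1 \le |G_i| \le k$, and set $\Pi_i := \prod_{f \in G_i} x_f$; then $\Pi_i^2 \equiv 1$, $\Pi_1 \Pi_2 \Pi_3 \Pi_4 \equiv 1 \pmod{IG}$, and $\deg \Pi_i = |G_i| \le k$. A direct expansion confirms
\[
P_C \;=\; (2 + \Pi_1 - \Pi_2 - \Pi_3 - \Pi_4) + h_1 + h_2 + h_3 + h_4,
\]
where $h_1 := (|G_1|-1) + x_e - \Pi_1 - \sum_{f \in G_1 \setminus \{e\}} x_f$ and $h_i := (|G_i|-1) + \Pi_i - \sum_{f \in G_i} x_f$ for $i \in \{2,3,4\}$. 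The main term is an instance of the short-circuit identity with $r = 4$ and $s_j = \Pi_j$, hence a single square of a polynomial of degree $\max_i |G_i| \le k$ in the $x_f$. Each $h_i$ is structurally the circuit-inequality polynomial for an abstract circuit of length $|G_i|+1$ in the universal ring $\RR[z_0,\ldots,z_{|G_i|}]/(z_j^2 - 1,\, 1 - z_0 z_1 \cdots z_{|G_i|})$, whose abstract variables correspond to $x_e$, $\Pi_1$, and the other $x_f \in G_1$ (for $h_1$), or to $\Pi_i$ and the $x_f \in G_i$ (for $h_i$, $i \ge 2$); the required ring relations all hold modulo $IG$. Since $|G_i|+1 \le k+1 < n$, the inductive hypothesis gives a sum-of-squares representation of $h_i$ of degree at most $\lceil (|G_i|+1)/4 \rceil$ in the abstract $z_j$. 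Substituting the abstract variable corresponding to $\Pi_i$ back to $\Pi_i = \prod_{f \in G_i} x_f$ and reducing modulo $(x_f^2 - 1)$, each abstract monomial $\prod_{j \in T} z_j$ of size $|T| \le \lceil(|G_i|+1)/4\rceil$ becomes a polynomial in the $x_f$ of degree at most $\max(|T|, |G_i| - |T| + 1) \le |G_i| \le k$. Thus each $h_i$ is $k$-sos modulo $IG$, and summing completes the induction.

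The main obstacle I anticipate is the degree bookkeeping under the substitution step: specializing an abstract variable to the degree-$|G_i|$ product $\Pi_i$ might \emph{a priori} inflate degrees, but the $(x_f^2-1)$-reductions cause monomials containing the substituted variable to \emph{shrink} in degree, so the worst case is exactly degree $|G_i| \le k$, attained only when the monomial is the substituted variable itself. This is precisely the accounting that makes the four-group partition tight against the hypothesis $4k \ge n$.
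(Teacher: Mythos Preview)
Your argument is correct and lands on exactly the same decomposition as the paper: partition $C$ into four blocks $G_1,\ldots,G_4$ of size at most $k$, write $P_C=(2+\Pi_1-\Pi_2-\Pi_3-\Pi_4)+h_1+h_2+h_3+h_4$, and dispatch the main term via the identity $q^2\equiv 4q$ (this is precisely Lemma~\ref{lemC1}). Where you diverge is in the treatment of the correction terms $h_i$. The paper isolates these as Lemma~\ref{lem3} and proves that lemma by peeling one edge at a time, using at each step the three-term identity of Lemma~\ref{lemC2} (your $r=3$ case) to telescope; this is a short direct induction on $|G_i|$ that stays inside $\RR E$ and needs no degree accounting under substitution. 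You instead recognise each $h_i$ as the circuit polynomial for an abstract $(|G_i|+1)$-circuit and invoke the \emph{theorem itself} recursively, then control the degree blow-up when the abstract variable is specialised to $\Pi_i$. Both routes certify that $h_i$ is $|G_i|$-sos modulo $IG$, so both are tight against the hypothesis $4k\ge |C|$; the paper's telescoping is shorter and avoids the substitution bookkeeping, while your version is pleasantly self-referential and uses only the single short-circuit identity. One point of care: your induction is phrased for circuits of a graph $G$, but you apply the hypothesis to an abstract circuit that is not a circuit of $G$. This is harmless once you note that the abstract ring is exactly $\RR[z]/IC_m$ for the cycle graph $C_m$ with $m=|G_i|+1$ (Theorem~\ref{theobaseM} gives $IC_m=(z_j^2-1,\,1-\prod_j z_j)$), and that your substitution sends this ideal into $IG$; but the inductive statement should be quantified over all graphs (or stated directly in the universal ring) so that this step is formally licensed.
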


The proof uses the following preliminary results.  For convenience,
for a graph $G=(V,E)$, let $\SSS_{k}$ denote the set of polynomials
$f\in\RR E$ that are $k$-sos modulo the cut ideal $IG$.

\begin{lemma}\label{lemC1}
For a graph $G$, let 
 $F_1,F_2,F_3,F_4\subseteq E$ with $|F_i|\le k$ and 
such that $F_1\Delta F_2\Delta F_3\Delta F_4 $ is a cycle of $G$.
Then $2+\xx^{F_1}-\xx^{F_2}-\xx^{F_3}-\xx^{F_4}\in \SSS_{k}$.
\end{lemma}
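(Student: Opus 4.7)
The plan is to exhibit $2 + \xx^{F_1} - \xx^{F_2} - \xx^{F_3} - \xx^{F_4}$ as a single square of a degree-$\le k$ polynomial modulo $IG$, which gives membership in $\SSS_k$ immediately. The candidate I would try is
\[
g \;:=\; 1 + \tfrac{1}{2}\bigl(\xx^{F_1} - \xx^{F_2} - \xx^{F_3} - \xx^{F_4}\bigr),
\]
a polynomial of degree at most $k$ since each $|F_i|\le k$. The problem then reduces to checking the congruence $g^2 \equiv 2 + \xx^{F_1} - \xx^{F_2} - \xx^{F_3} - \xx^{F_4} \pmod{IG}$.

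Expanding, $g^2 = 1 + (\xx^{F_1} - \xx^{F_2} - \xx^{F_3} - \xx^{F_4}) + \tfrac{1}{4}(\xx^{F_1} - \xx^{F_2} - \xx^{F_3} - \xx^{F_4})^2$, so the job is to show that the squared bracket reduces to $4$ modulo $IG$. Each diagonal term $(\xx^{F_i})^2$ reduces to $1$ via $x_e^2 - 1 \in IG$, contributing $4$ in total. The six off-diagonal cross terms group into the three natural pair-partitions $\{\xx^{F_1}\xx^{F_2},\xx^{F_3}\xx^{F_4}\}$, $\{\xx^{F_1}\xx^{F_3},\xx^{F_2}\xx^{F_4}\}$ and $\{\xx^{F_1}\xx^{F_4},\xx^{F_2}\xx^{F_3}\}$, each pair appearing with opposite signs.

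The cycle hypothesis is exactly what makes each such pair agree modulo $IG$. For instance, $(F_1 \Delta F_2) \Delta (F_3 \Delta F_4) = F_1 \Delta F_2 \Delta F_3 \Delta F_4$ lies in $\C^*$ of the cographic matroid $\M_G^*$, so $F_1 \Delta F_2 \sim F_3 \Delta F_4$ under (\ref{equirel}). Lemma~\ref{lemequi} applied to $\M_G^*$, together with $\xx^{F_i}\xx^{F_j} \equiv \xx^{F_i \Delta F_j} \pmod{(x_e^2-1)}$, then yields $\xx^{F_1}\xx^{F_2} \equiv \xx^{F_3}\xx^{F_4} \pmod{IG}$; the same argument handles the other two pair identities. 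The three cancellations collapse the squared bracket to $4$, completing the verification. The only real obstacle in this proof is spotting $g$; once written down, everything else is a routine expansion using the two standard reductions $(\xx^{F_i})^2 \equiv 1$ and the pair-equivalences derived from the cycle hypothesis.
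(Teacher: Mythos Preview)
Your proof is correct and essentially the same as the paper's: the paper sets $p:=2+\xx^{F_1}-\xx^{F_2}-\xx^{F_3}-\xx^{F_4}$ and checks $p^2\equiv 4p$ modulo $IG$, which is exactly your computation $g^2\equiv p$ with $g=p/2$. The cross-term cancellations you derive via Lemma~\ref{lemequi} are obtained in the paper from the equivalent observation $\xx^{F_1}\xx^{F_2}\xx^{F_3}\xx^{F_4}\equiv 1$ (since $1-\xx^C\in IG$), so the arguments match.
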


\begin{proof}
  We use the following fact: As $C:=F_1\Delta F_2\Delta F_3\Delta F_4$
  is a cycle, $1-\xx^C\in IG$ by Theorem \ref{theobaseM}, and thus
  $1\equiv \xx^C \equiv \xx^{F_1}\xx^{F_2}\xx^{F_3}\xx^{F_4}$ modulo
  $IG$. This implies that $\xx^{F_i}\xx^{F_j} \equiv
  \xx^{F_k}\xx^{F_l}$ for $\{i,j,k,l\} = \{1,2,3,4\}$.  Now, one can
  easily verify that $(2+\xx^{F_1}-\xx^{F_2}-\xx^{F_3}-\xx^{F_4})^2
  \equiv 4(2+\xx^{F_1}-\xx^{F_2}-\xx^{F_3}-\xx^{F_4})$ modulo $IG$,
  which gives the result.
\end{proof}

\begin{lemma}\label{lemC2}
For a graph $G$, let $A,B\subseteq E$ with $|A|,|B|,|A\Delta B|\le k$.
Then 
 $1+\xx^A -\xx^B-\xx^{A\Delta B}\in \SSS_{k}$.
\end{lemma}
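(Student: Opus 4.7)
The plan is to obtain Lemma \ref{lemC2} as an immediate corollary of Lemma \ref{lemC1} by specializing to the case where one of the four sets is empty. Set $F_1 := A$, $F_2 := B$, $F_3 := A\Delta B$, and $F_4 := \emptyset$. The cardinality bounds $|F_i|\le k$ hold by hypothesis for $i=1,2,3$ and trivially for $i=4$, and $F_1\Delta F_2\Delta F_3\Delta F_4 = A\Delta B\Delta (A\Delta B) = \emptyset$ is (vacuously) a cycle of $G$. Since $\xx^{\emptyset}=1$, Lemma \ref{lemC1} yields $2+\xx^A-\xx^B-\xx^{A\Delta B}-1 = 1+\xx^A-\xx^B-\xx^{A\Delta B}\in\SSS_k$, as desired.

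If one prefers a self-contained argument that does not invoke Lemma \ref{lemC1}, the same identity can be verified directly and mirrors the proof of that lemma. Set $p:=1+\xx^A-\xx^B-\xx^{A\Delta B}$. The relations $x_e^2-1\in IG$ give $\xx^F\xx^{F'}\equiv \xx^{F\Delta F'}$ modulo $IG$ for any $F,F'\subseteq E$; in particular $(\xx^A)^2\equiv (\xx^B)^2\equiv (\xx^{A\Delta B})^2\equiv 1$, $\xx^A\xx^B\equiv \xx^{A\Delta B}$, $\xx^A\xx^{A\Delta B}\equiv \xx^B$, and $\xx^B\xx^{A\Delta B}\equiv \xx^A$. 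Expanding $p^2$ and collecting terms using these simplifications gives $p^2\equiv 4p$ modulo $IG$, hence $p=(p/2)^2+h$ for some $h\in IG$. Since $\deg(p/2)\le \max(|A|,|B|,|A\Delta B|)\le k$, this exhibits $p$ as a $k$-sos modulo $IG$.

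There is no real obstacle in either route: the statement is essentially the degenerate case of the preceding lemma with $F_4=\emptyset$, and the underlying mechanism is the same "$p^2\equiv 4p$" squaring trick that drives Lemma \ref{lemC1}.
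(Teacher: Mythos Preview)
Your proposal is correct. Your self-contained second route is exactly the paper's proof: the paper simply expands $(1+\xx^A-\xx^B-\xx^{A\Delta B})^2$ and verifies it equals $4(1+\xx^A-\xx^B-\xx^{A\Delta B})$ modulo $IG$, which is precisely your $p^2\equiv 4p$ computation.

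Your first route, deducing the result from Lemma~\ref{lemC1} by taking $F_4=\emptyset$ (and using that $\emptyset$ is a cycle), is a legitimate shortcut that the paper does not take. It makes explicit that Lemma~\ref{lemC2} is the degenerate case of Lemma~\ref{lemC1}, at the small cost of relying on the earlier lemma; the paper instead repeats the squaring trick directly, keeping the two lemmas formally independent but hiding the fact that one subsumes the other.
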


\begin{proof}
  We have $(1+\xx^A -\xx^B-\xx^{A\Delta B})^2 \equiv 4+2(\xx^A
  -\xx^B-\xx^{A\Delta B}) + 2( -\xx^A\xx^B-\xx^A\xx^{A\Delta
    B}+\xx^B\xx^{A\Delta B}) \equiv 4(1+\xx^A -\xx^B-\xx^{A\Delta B})$
  modulo $IG$.
\end{proof}

\begin{lemma}\label{lem3}
For a graph $G$, let $F\subseteq E$, $e\in F$, and $k\ge |F|$.
Then:
\begin{equation}\label{relC}
\begin{array}{l}
\text{ (i) } \ \ k-1 +x_e -\sum_{f\in F \setminus \{e\}}x_f -\xx^F \in
\SSS_{k},\\ 
\text{ (ii) } \ k-1 -\sum_{f\in F} x_f +\xx^F  \in \SSS_{k}.
\end{array}
\end{equation}
\end{lemma}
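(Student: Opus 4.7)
The plan is to prove both (i) and (ii) by reducing to sharper statements (i*) and (ii*), obtained by replacing the constant $k-1$ throughout by the tight value $|F|-1$. Once (i*) and (ii*) are established as $|F|$-sos modulo $IG$, the original statements follow at once: the difference $k-|F|\ge 0$ is a nonnegative constant (hence trivially sos), and any element of $\SSS_{|F|}$ is in $\SSS_k$ for $k\ge |F|$, so the sum remains in $\SSS_k$.

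I would prove (i*) and (ii*) by simultaneous induction on $|F|\ge 1$. In the base case $|F|=1$, both polynomials collapse identically to $0$, which is trivially in $\SSS_1$.

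For the inductive step of (ii*), pick any $g\in F$ and set $F':=F\setminus\{g\}$. Lemma~\ref{lemC2} applied with $A=F$, $B=\{g\}$ (so $A\Delta B=F'$ and the three cardinalities $|F|, 1, |F|-1$ are all at most $|F|$) yields $1+\xx^F - x_g - \xx^{F'}\in \SSS_{|F|}$. Adding the induction hypothesis $(|F'|-1)-\sum_{f\in F'}x_f + \xx^{F'}\in \SSS_{|F'|}\subseteq \SSS_{|F|}$ telescopes exactly to $(|F|-1)-\sum_{f\in F}x_f + \xx^F$, proving (ii*) for $F$. The argument for (i*) is entirely parallel: since $|F|\ge 2$ one can choose $g\in F\setminus\{e\}$ and set $F':=F\setminus\{g\}$, so that $e\in F'$; apply Lemma~\ref{lemC2} with $A=F'$, $B=F$ to obtain $A\Delta B=\{g\}$ and $1+\xx^{F'}-\xx^F-x_g\in \SSS_{|F|}$, and adding this to the induction hypothesis of (i*) for $(F',e)$ produces the desired expression for $(F,e)$.

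The main subtlety, and the reason for passing through the sharper version, is that a naive induction on the statement exactly as formulated with constant $k-1$ fails to close: combining the induction hypothesis for $F'$ with Lemma~\ref{lemC2} in the same way produces only the weaker identity $k+\xx^F-\sum_{f\in F}x_f\in \SSS_k$, and since membership in $\SSS_k$ is not preserved under subtracting a positive constant, one cannot recover the stronger target from this. Fixing $k$ to the tight value $|F|$ throughout the induction avoids this one unit of lost slack, and the general case $k\ge|F|$ is recovered for free by adding the nonnegative constant $k-|F|$ at the end.
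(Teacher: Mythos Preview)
Your proof is correct and follows essentially the same approach as the paper: reduce to the tight case $k=|F|$, then induct on $|F|$ by peeling off one element $g$ via Lemma~\ref{lemC2} and telescoping with the induction hypothesis. The only cosmetic differences are that you start the base case at $|F|=1$ (where both expressions vanish) rather than $|F|=2$, and you spell out explicitly why the reduction to $k=|F|$ is needed, which the paper simply asserts.
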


\begin{proof}
  It suffices to show the result for $k=|F|$. We show (i) using
  induction on $k\ge 2$. (The proof for (ii) is analogous.)  For
  $k=2$, $F=\{e,f\}$, we have $1+x_e - x_f -x_ex_f \in \SSS_{2}$ by
  Lemma~\ref{lemC2}. Consider now $|F|=k\ge 3$ and let $g\in
  F\setminus \{e\}$.  By the induction assumption applied to the set
  $F\setminus \{g\}$, we have:
$$k-2 +x_e-\sum_{f\in F\setminus\{e,g\}}x_f -\xx^{F\setminus \{g\}}
\in\SSS_{k-1}\subseteq \SSS_{k}.$$ 
Applying Lemma \ref{lemC2} to the sets $F\setminus\{g\}$, $\{g\}$ and
$F$, we obtain
$$1+ \xx^{F\setminus\{g\}} -x_g-\xx^F\in \SSS_{k}.$$
Summing up the above two relations yield the desired relation
(\ref{relC})(i).
\end{proof}

\begin{proof} ({\em of Theorem \ref{theoC0}}) Let $C$ be a circuit in
  $G$ with $|C|\le 4k$, i.e.  $k\ge m:=\lceil |C|/4\rceil$.  Let $F$
  denote the edge set of $C$ and let $e\in F$.  We show that the
  linear polynomial $f_{C}:= x_e-\sum_{f\in F\setminus \{e\}}x_f
  +|C|-2$ is $k$-sos modulo $IG$.  For this we consider a partition of
  $F$ into four sets $F_1,\ldots,F_4$ with $|F_i|\le m\le k$ for
  $i=1,\ldots,4$; say $e\in F_1$. Applying Lemma \ref{lemC1}, we
  obtain that
$$2+\xx^{F_1}-\xx^{F_2}-\xx^{F_3}-\xx^{F_4}\in \SSS_{k}.$$
Next, applying the condition (\ref{relC})(i) to $F_1$ we obtain
$$|F_1|-1 +x_{e} - \sum_{f\in F_1\setminus \{e\}}x_f -\xx^{F_1} \in \SSS_k,$$
and applying the condition (\ref{relC})(ii) to $F_i$ yields
$$|F_i|-1 -\sum_{f\in F_i}x_f +\xx^{F_i} \in \SSS_k \ \ \forall i=2,3,4.$$
Summing up the above relations yields the desired result, namely 
$f_{C}$ is $k$-sos modulo $IG$ and thus $f_{C} \geq 0$ is valid
for $\TH_k(IG)$.
\end{proof}

%

\begin{corollary}\label{corC1}
  For the circuit $C_n$ of length $n$, the equality
  $\TH_k(IC_n)=\CUT(C_n)$ holds for $n\le 4k$.
\end{corollary}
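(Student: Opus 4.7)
The plan is to combine the polyhedral description of $\CUT(C_n)$ from Theorem \ref{thm:classification} with the single-edge cycle inequality validated in Theorem \ref{theoC0}, and then use a cut-switching symmetry to promote it to every odd subset.

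First I would identify the defining inequalities of $\CUT(C_n)$. The cographic matroid $\M_{C_n}^*$ contains none of the three forbidden minors: $F_7^*$ and $R_{10}$ fail to be cographic at all, while $\M_{K_5}^*$ being a minor of $\M_{C_n}^*$ would force, by Lemma \ref{lemK5}, $K_5$ to be a contraction minor of $C_n$, which is absurd. Theorem \ref{thm:classification} therefore yields $\CUT(C_n) = \MET(\M_{C_n}^*)$, and since the only Eulerian subgraph of $C_n$ (hence the only cocircuit of $\M_{C_n}^*$) is $E$ itself, the defining inequalities consist of the box bounds $-1 \le x_e \le 1$ together with
\[
\sum_{e \in F} x_e \;-\; \sum_{e \in E \setminus F} x_e \;\ge\; 2 - n \qquad \text{for every odd } F \subseteq E.
\]
As $\CUT(C_n) \subseteq \TH_k(IC_n)$ is automatic, it suffices to verify each of these inequalities on $\TH_k(IC_n)$ when $n \le 4k$. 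The box bounds follow from $(1 \pm x_e)^2 \equiv 2(1 \pm x_e)$ modulo $IC_n$, which displays $1 \pm x_e$ as $1$-sos modulo $IC_n$ and hence as nonnegative on $\TH_1(IC_n) \supseteq \TH_k(IC_n)$. The cycle inequality in the singleton case $F = \{e_0\}$ is exactly Theorem \ref{theoC0} applied to $G = C_n$ and its circuit $C = C_n$, the hypothesis $4k \ge |C| = n$ being the present assumption.

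The main step is to deduce the cycle inequality for an arbitrary odd $F$ from this singleton case, for which I would invoke a cut-switching symmetry. For any cut $A$ of $C_n$, the algebra automorphism $\sigma_A$ of $\RR E$ sending $x_e \mapsto -x_e$ for $e \in A$ and fixing the remaining variables preserves the ideal $IC_n$ (because the cuts form a group under $\Delta$, so $\sigma_A$ permutes the cut incidence vectors $\chi^D \mapsto \chi^{D\Delta A}$), preserves total degree, and carries sums of squares to sums of squares. Consequently it maps linear polynomials that are $k$-sos modulo $IC_n$ into linear polynomials that are $k$-sos modulo $IC_n$, so the companion sign-flip on $\RR^E$ acts as a linear automorphism of $\TH_k(IC_n)$. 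Given an odd $F \subseteq E$, pick any $e_0 \in F$; then $A := F \setminus \{e_0\}$ has even cardinality, and in $C_n$ every even subset of $E$ is a cut, so $A$ is indeed a cut. Applying $\sigma_A$ to the already-validated inequality $x_{e_0} - \sum_{f \ne e_0} x_f \ge 2 - n$ produces exactly the $F$-indexed cycle inequality, completing the argument. The only subtle point to verify is that $\sigma_A$ stabilizes $IC_n$, which ultimately rests on the identification of the cuts of $C_n$ with the even-cardinality subsets of $E$.
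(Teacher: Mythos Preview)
Your proof is correct and follows the same approach as the paper: apply Theorem~\ref{thm:classification} to obtain the facet description of $\CUT(C_n)$, handle the box constraints directly, invoke Theorem~\ref{theoC0} for the singleton case $|F|=1$, and use cut switching to reach all odd $F$. The paper compresses the switching step to a one-line citation of the ``well-known switching symmetries,'' whereas you spell out that $\sigma_A$ is an ideal-preserving, degree-preserving algebra automorphism and hence stabilizes $\TH_k(IC_n)$ itself---a detail that is indeed needed here and that the paper leaves implicit.
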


\begin{proof}
  Consider the circuit $C_n=([n],E)$ with $n\le 4k$. By Theorem
  \ref{thm:classification}, the complete linear description of
  $\CUT(C_n)$ is provided by the inequalities (i) $\sum_{e\in
    F}x_e-\sum_{e\in E\setminus F}x_e \ge 2-n$ where $F$ is any odd
  subset of $E$, and (ii) $-1 \leq x_e \leq 1$ for all $e \in E$.
  Thus in order to show $\TH_k(IC_n)=\CUT(C_n)$, it suffices to show
  that the inequalities (i),(ii) are all valid for $\TH_k(IC_n)$. This
  is obvious for (ii).  Using the well-known switching symmetries of
  the cut polytope (cf. \cite{Barahona-Groetschel}, \cite{DL97}), it
  suffices to show the desired property for the inequalities (i) with
  $|F|=1$.  But this result has just been shown in
  Theorem~\ref{theoC0}.
\end{proof}

\begin{lemma}\label{lemC4}
If $n\ge 4k+1$, then $\TH_k(IC_n)=[-1,1]^E$.
\end{lemma}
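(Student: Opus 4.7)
The plan is as follows. The graph $C_n$ has only two cycles, $\emptyset$ and the entire edge set $E$, so the cocycles of the cographic matroid $\M_{C_n}^*$ are $\C^*=\{\emptyset,E\}$, and the equivalence relation (\ref{equirel}) partitions $\PPP(E)$ into the pairs $\{F,E\setminus F\}$. Since $n\ge 4k+1>2k$, exactly one element of each such pair has size at most $k$, so I may take $\F_k=\{F\subseteq E:|F|\le k\}$.

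Given any $\yy\in [-1,1]^E$, I would construct a feasible matrix $X$ by a product-measure moment construction: let $\xi_e$ ($e\in E$) be independent $\pm 1$-valued random variables with $\mathbb{E}[\xi_e]=y_e$, and set
\[
X_{F_1,F_2}:=\mathbb{E}\Bigl[\prod_{e\in F_1}\xi_e\cdot\prod_{e\in F_2}\xi_e\Bigr]=\prod_{e\in F_1\Delta F_2} y_e,
\]
where the second equality uses $\xi_e^2=1$ and independence. Writing $X=\mathbb{E}[\uu\uu^T]$ with $\uu_F=\prod_{e\in F}\xi_e$ shows $X\succeq 0$, and by construction $X_{\emptyset,\emptyset}=1$ and $X_{\emptyset,\{e\}}=y_e$ for every edge $e$.

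The key step is then verifying the cocycle constraints (\ref{matX})(ii). The case $F_1\Delta F_2\Delta F_3\Delta F_4=\emptyset$ is automatic because $X_{F_1,F_2}$ depends only on $F_1\Delta F_2$. The remaining case $F_1\Delta F_2\Delta F_3\Delta F_4=E$ is precisely where the hypothesis $n\ge 4k+1$ enters: for any $F_i\in\F_k$ we have $|F_1\Delta F_2\Delta F_3\Delta F_4|\le 4k<n=|E|$, so no such quadruple exists and this constraint is vacuous. Hence $\yy\in \TH_k(IC_n)$, giving $[-1,1]^E\subseteq \TH_k(IC_n)$. The reverse containment is immediate: the identities $1\pm x_e\equiv \tfrac{1}{2}(1\pm x_e)^2$ modulo $IC_n$ (valid since $x_e^2\equiv 1$) show that $-1\le x_e\le 1$ is already valid on $\TH_1(IC_n)\supseteq \TH_k(IC_n)$.

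I do not anticipate a serious obstacle here. The whole argument hinges on the observation that the cocycle $E$ of $\M_{C_n}^*$ cannot be written as the symmetric difference of four sets each of size at most $k$ when $n\ge 4k+1$, so the cocycle constraints (\ref{matX})(ii) collapse to the ordinary moment constraints (\ref{matXmom}), which any product distribution on $\{\pm 1\}^E$ automatically satisfies.
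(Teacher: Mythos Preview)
Your proof is correct and takes essentially the same approach as the paper's. Both arguments rest on the same key observation: since $|F_1\Delta F_2\Delta F_3\Delta F_4|\le 4k<n$ for $F_i\in\F_k$, the only cocycle constraint in (\ref{matX})(ii) that can arise is the trivial one $F_1\Delta F_2\Delta F_3\Delta F_4=\emptyset$, so the constraints reduce to the ordinary moment constraints (\ref{matXmom}). The paper then appeals to Remark~\ref{remconstraint} (rank-one matrices for the vertices of $[-1,1]^E$ plus convexity), whereas you give the equivalent product-measure construction directly; your version is a bit more self-contained, and you also spell out the reverse containment $\TH_k(IC_n)\subseteq[-1,1]^E$, which the paper leaves implicit.
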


\begin{proof} In view of Remark \ref{remconstraint}, it suffices to
  observe that the constraints (\ref{matX}) defining the theta body
  $\TH_k(IC_n)$ reduce to the constraints (\ref{matX})(i) and
  (\ref{matXmom}). Let $\F_k$ be the set indexing the combinatorial
  moment matrices in the definition of $\TH_k(IC_n)$, where we can
  assume that each $F_i \in \F_k$ has cardinality at most $k$. Now
  consider a constraint of type (\ref{matX})(ii). Since
  $F_1,\ldots,F_4\in\F_k$ have size at most $k$ and $\Delta_i F_i$ is
  a cycle of $C_n$, this cycle must be the empty set since $|\Delta_i
  F_i| \leq 4k < n$. Therefore we have a constraint of type
  (\ref{matXmom}).
\end{proof}

\begin{corollary}\label{corordercircuit}
The smallest order $k$ at which $IC_n$ is $\TH_k$-exact is $k=\lceil
n/4\rceil$. 
\end{corollary}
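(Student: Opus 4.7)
The plan is to combine the two preceding results, Corollary~\ref{corC1} and Lemma~\ref{lemC4}, using a simple arithmetic comparison involving $\lceil n/4\rceil$. Set $k^{*}:=\lceil n/4\rceil$.

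First I would establish the upper bound, namely that $IC_n$ is $\TH_{k^{*}}$-exact. By definition of the ceiling, $n\le 4k^{*}$, so Corollary~\ref{corC1} applies directly and yields $\TH_{k^{*}}(IC_n)=\CUT(C_n)$.

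Next I would show the matching lower bound, namely that $IC_n$ is not $\TH_k$-exact for any $k<k^{*}$. Assume $k\le k^{*}-1$. Since $\lceil n/4\rceil -1 < n/4$, we have $4k\le 4(k^{*}-1)<n$, i.e.\ $n\ge 4k+1$. Lemma~\ref{lemC4} then gives $\TH_k(IC_n)=[-1,1]^E$. It therefore suffices to observe that $\CUT(C_n)$ is strictly contained in $[-1,1]^E$ whenever $k<k^{*}$. This is automatic: the condition $k<k^{*}$ forces $n\ge 4k+1\ge 5$, and $\CUT(C_n)$ has only $2^{n-1}$ vertices whereas $[-1,1]^E$ has $2^n$ vertices, so the inclusion is strict (alternatively, one can invoke Example~\ref{excircuit}). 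Hence $IC_n$ is not $\TH_k$-exact.

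Putting the two halves together gives that $k^{*}=\lceil n/4\rceil$ is precisely the smallest order of exactness, proving the corollary. There is no real obstacle here; the content of the result is entirely in the bracketing statements Corollary~\ref{corC1} and Lemma~\ref{lemC4}, and the only step to be verified is the simple ceiling inequality $4(\lceil n/4\rceil -1)<n$ together with the observation that $[-1,1]^E\ne\CUT(C_n)$ for $n\ge 5$.
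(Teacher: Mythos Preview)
Your argument is correct and follows exactly the route the paper takes (the paper's proof is the one-line ``Directly from Theorem~\ref{theoC0} and Lemma~\ref{lemC4}''); you have simply unpacked the ceiling arithmetic and invoked Corollary~\ref{corC1}, which is in fact the more precise reference for the upper bound. The only minor point is that your inequality $4k+1\ge 5$ tacitly assumes $k\ge 1$; this is harmless since for $n\le 4$ one has $k^*=1$ and there is no smaller positive $k$ to rule out.
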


\begin{proof}
Directly from Theorem \ref{theoC0} and Lemma \ref{lemC4}.
\end{proof}

\begin{remark}\label{remcircuit}
  One can verify that the linear form $x_e-\sum_{f\in C_n\setminus
    \{e\}}x_f$ takes $\lfloor (n+1)/2\rfloor$ distinct values at the
  cut vectors of the circuit $C_n$. By (\ref{relklevel}), this permits
  to conclude that $IC_n$ is $\TH_k$-exact for $k= \lfloor
  (n+1)/2\rfloor-1$. This value is however larger than the order
  $\lceil n/4\rceil$ shown in Corollary \ref{corordercircuit} (for
  $n\ge 6$).  Thus the reverse implication of (\ref{relklevel}) does
  not hold.
\end{remark}

\begin{corollary}\label{cornoC9}
If the graph $G$ has no $K_5$ minor and no chordless circuit of length at
least 9, then its cut ideal $IG$ is $\TH_2$-exact.
\end{corollary}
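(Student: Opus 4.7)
The plan is to show $\TH_2(IG)\subseteq\CUT(G)$, the reverse inclusion being automatic, by checking that every facet-defining inequality of $\CUT(G)$ holds on $\TH_2(IG)$.

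The first step is to obtain a complete linear description of $\CUT(G)$. The hypothesis that $G$ has no $K_5$ minor together with Lemma~\ref{lemK5} gives that $\M_G^*$ has no $\M_{K_5}^*$ minor, and since the class of cographic matroids is closed under taking minors, $\M_G^*$ has neither $F_7^*$ nor $R_{10}$ as a minor (neither of these matroids being cographic). Theorem~\ref{thm:classification} then yields $\CUT(G)=\CYC(\M_G^*)=\MET(\M_G^*)$, and by Lemma~\ref{lemfacet} the non-trivial facets of $\CUT(G)$ are exactly the inequalities (\ref{ineqmet}) with $D$ a chordless cocircuit of $\M_G^*$, i.e., a chordless circuit of $G$. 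The length hypothesis on $G$ then forces $|D|\le 8 = 4\cdot 2$.

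The second step is to invoke Theorem~\ref{theoC0} with $k=2$ for each chordless circuit $D$ of $G$ and each $e\in D$: since $4k\ge |D|$, the inequality $x_e-\sum_{f\in D\setminus\{e\}}x_f\ge 2-|D|$ is valid for $\TH_2(IG)$, handling exactly the $|F|=1$ case of (\ref{ineqmet}). The box constraints $1\pm x_e\ge 0$ are already valid for $\TH_1(IG)$ via the identity $1\pm x_e\equiv \frac{1}{2}(1\pm x_e)^2$ modulo $IG$ (using $x_e^2-1\in IG$), and hence for $\TH_2(IG)$.

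The third and most delicate step is to pass from $|F|=1$ to arbitrary odd $F\subseteq D$. I would do this via the switching symmetries of the cut polytope: flipping signs along a cutset $\delta_G(S)$ permutes cut vectors of $G$, so leaves $IG$ invariant, and hence acts as a linear automorphism of $\TH_k(IG)$ for every $k$ (since any $k$-sos decomposition modulo $IG$ is carried to another one by the same linear change of variables). Given odd $F\subseteq D$, one needs $S\subseteq V(G)$ with $\delta_G(S)\cap D=F\Delta\{e\}$, which is an even subset of the cycle $D$; such $S$ exists because every even subset of the edges of a cycle graph is realized as the cut of some vertex bipartition of the cycle, and this bipartition extends trivially from $V(D)$ to $V(G)$. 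Switching by such $S$ then carries the validated $F=\{e\}$ inequality to the one indexed by $F$. Combining these steps yields $\TH_2(IG)\subseteq \CUT(G)$, and hence the claimed $\TH_2$-exactness.
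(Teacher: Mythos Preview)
Your proof is correct and follows the same approach as the paper's one-line proof (``Direct application of Theorems~\ref{thm:classification} and~\ref{theoC0}''), just with the details spelled out. In particular, the switching-symmetry reduction from arbitrary odd $F$ to $|F|=1$ is exactly what the paper uses explicitly in the proof of Corollary~\ref{corC1} and implicitly here; your justification that the required cut of $G$ exists (because every even subset of a circuit's edges is a cut of the circuit, extended trivially to $V(G)$) is a correct way to fill that in.
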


\begin{proof}
Direct application of Theorems~\ref{thm:classification} and~\ref{theoC0}.
\end{proof}

Note that the reverse implication in Corollary \ref{cornoC9} does not
hold. We will see below (in Corollary \ref{cordiameter}) that the cut
ideal of a wheel is $\TH_2$-exact, but a wheel can contain a chordless
circuit of arbitrary length.

While we could characterize the graphs whose cut ideal is $\TH_1$-exact,
it is an open problem to characterize the graphs whose cut ideal is
$\TH_2$-exact. We
conclude this section with several observations about these graphs.

\begin{corollary}\label{cordiameter}
If the graph $G$ has no $K_5$ minor and has diameter at most~2 then its
cut ideal
$IG$ is $\TH_2$-exact.
\end{corollary}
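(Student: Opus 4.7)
Since $G$ has no $K_5$ minor, Lemma~\ref{lemK5} shows $\M_G^*$ has no $\M_{K_5}^*$ minor, and being cographic it has no $F_7^*$ or $R_{10}$ minor either, so Theorem~\ref{thm:classification} gives $\CUT(G) = \MET(\M_G^*)$. Thus $\CUT(G)$ is cut out by the edge bounds $-1 \leq x_e \leq 1$ together with the chordless circuit inequalities~(\ref{ineqmet}). Since $\CUT(G) \subseteq \TH_2(IG)$ always, my plan is to show the reverse inclusion by verifying that each facet-defining inequality of $\CUT(G)$ is valid on $\TH_2(IG)$. Edge bounds are automatic, and for a chordless circuit $C$ with $|C| \leq 8$, Theorem~\ref{theoC0} with $k = 2$ gives the validity of~(\ref{eqC}); using the switching symmetries of $\CUT(G)$ (as in the proof of Corollary~\ref{corC1}), this extends to every odd-subset form of~(\ref{ineqmet}) associated with $C$. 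This already covers the case handled by Corollary~\ref{cornoC9}, so the nontrivial case is a chordless circuit $C$ of length $m \geq 9$, where the diameter-$2$ hypothesis is genuinely needed (wheels $W_n$ for $n \geq 9$ already show that such long chordless circuits do occur in our class).

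For a chordless circuit $C$ of length $m \geq 9$, I would decompose $C$ using the diameter-$2$ hypothesis: any non-adjacent pair $v_i, v_j$ on $C$ has a common neighbor $w \in V(G)$, and the resulting length-$2$ path $v_i\text{--}w\text{--}v_j$ yields a decomposition $C = C_1 \Delta C_2$ into strictly shorter circuits $C_1, C_2$ sharing the edges $\{v_iw, wv_j\}$. Iterating this reduction, and choosing the pivots so that both $|C_1|$ and $|C_2|$ decrease towards $\le 8$, one expresses $C$ as a nested symmetric difference of circuits each of length at most $8$. The multiplicative identities $\xx^{C_1}\xx^{C_2} \equiv \xx^{C_1 \Delta C_2}$ modulo $IG$ (using $x_e^2 \equiv 1$ for the shared edges from the generators in~(\ref{setH})), together with the sos building blocks of Lemmas~\ref{lemC1}--\ref{lem3}, should then let one splice the degree-$2$ sos certificates for the short circuits (produced by the proof of Theorem~\ref{theoC0}) into a degree-$2$ sos certificate for $f_C$ modulo $IG$.

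The main obstacle is making this sos-splicing precise. A direct linear combination of shorter circuit inequalities does not close up: already for the outer cycle of $W_9$, decomposing through the center vertex as a symmetric difference of triangles and shorter circuits leaves an $O(1)$ residual in the linear span of facet inequalities. So the argument must genuinely exploit the flexibility of the sos condition modulo $IG$ rather than mere conic combinations of facets, and it must verify that the spliced certificate uses only polynomials of degree at most $2$ in the edge variables of $G$; this is the delicate step where the diameter-$2$ structure (supplying common-neighbor paths of length exactly $2$, which keep the intermediate sos pieces within degree $2$) is essential.
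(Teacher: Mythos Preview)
Your proposal has a genuine gap: the case of a chordless circuit $C$ of length $m\ge 9$ is not proved. You outline a plan to decompose $C$ through common-neighbor paths and ``splice'' the resulting degree-$2$ sos certificates, but you explicitly concede that a conic combination of shorter circuit inequalities does not close up, and you do not produce the claimed sos certificate for $f_C$. The difficulty is real: the building blocks in Lemmas~\ref{lemC1}--\ref{lem3} introduce auxiliary monomials $\xx^{F_i}$ which must cancel exactly, and when $|C|\ge 9$ any partition of $C$ into four pieces of size at most $2$ is impossible, so Lemma~\ref{lemC1} cannot be applied directly to $C$. Routing through vertices off $C$ creates extra edge variables that have to be eliminated by further sos identities, and you have not shown that this can be done while keeping all squares of degree at most $2$. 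As it stands, the argument is a heuristic rather than a proof.

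The paper avoids this entirely by a two-line argument that goes through the $Q_t$-hierarchy rather than through facet-by-facet sos certificates. By Lemma~\ref{lemdiameter}, the diameter-$2$ hypothesis forces the index set $\T_2$ to contain all pairs of vertices, hence $\TH_2(IG)\subseteq Q_2(G)$. One then invokes the result of \cite{LaurentMaxCut} that $Q_2(G)=\CUT(G)$ whenever $G$ has no $K_5$ minor. Chaining gives $\CUT(G)\subseteq \TH_2(IG)\subseteq Q_2(G)=\CUT(G)$, and no decomposition of long circuits is ever needed. The moral is that the diameter-$2$ condition is used structurally (to enlarge the moment-matrix index set so that it dominates the $Q_2$ relaxation), not analytically (to manufacture sos certificates for individual facets).
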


\begin{proof}
  As $G$ has diameter at most 2, Lemma \ref{lemdiameter} gives the
  inclusion $\TH_2(IG)\subseteq Q_2(G)$. It was shown in
  \cite{LaurentMaxCut} that if $G$ has no $K_5$ minor then
  $Q_2(G)=\CUT(G)$.
\end{proof}

A {\em wheel} of length $n$ is a graph consisting of a circuit of
length $n$ with an additional vertex adjacent to all vertices on the
circuit. As wheels have no $K_5$ minor and their diameter is 2, their
cut ideal is $\TH_2$-exact.  Hence, within graphs with no $K_5$
minors, the cut ideal is $\TH_2$-exact for the following two classes:
graphs with diameter at most~2 and graphs with no chordless circuit of
size at least~9. Note that there is no containment between these two
classes; e.g. wheels of length $n\ge 9$ have diameter 2 but contain a
circuit of length $n$, and $C_8$ has diameter larger than 2.

The following further graphs have a $\TH_2$-exact cut ideal:
$K_5,K_6,K_7$ (and probably $K_8$ too, as conjectured in
\cite{LaurentMaxCut}). Finally, if the cut ideal of a graph $G$ is
$\TH_2$-exact, then the same holds for the cut ideal of any {\em
  contraction} minor $H$ of $G$; in particular, $C_9$ is not a
contraction minor of $G$.


\begin{thebibliography}{99}

\bibitem{Bar83}
F. Barahona.
The max-cut problem on graphs not contractible to $K_5$.
{\em Operations Research Letters}, {\bf 2}:107--111, 1983.


\bibitem{Barahona-Groetschel}
F. Barahona and M. Gr\"otschel.
On the cycle polytope of a binary matroid.
{\em Journal of Combinatorial Theory. Series B}, 40(1):40--62, 
1986.

\bibitem{BarahonaMahjoub86}
F. Barahona and A.-R. Mahjoub.
\newblock On the cut polytope.
\newblock {\em Math. Programming}, 36(2):157--173, 1986.



\bibitem{BCR}
J. Bochnak, M. Coste and M.-F. Roy.
{\em Real Algebraic Geometry},
Springer, 1998.

\bibitem{CLO}
D.A. Cox, J.B. Little and D.B. O'Shea.
{\em Ideals, Varieties and Algorithms: An Introduction to
  Computational Algebraic Geometry and Commutative Algebra}, 
Springer, 2005.

\bibitem{DL97}
M.M. Deza and M. Laurent.
{\em Geometry of Cuts and Metrics}.
Springer, 1997.

\bibitem{EJ73}
J. Edmonds and E.L. Johnson.
Matching, Euler tours and the Chinese postman.
{\em Mathematical Programming} {\bf 5}:88--124, 1973.


\bibitem{GJ79} M.R. Garey and D.S. Johnson.  {\em Computers and
    Intractability: A Guide to the Theory of NP-Completeness}, San
  Francisco, W.H. Freeman \& Company, Publishers, 1979.

\bibitem{GW95}
M.X. Goemans and D. Williamson.
Improved approximation algorithms for maximum cuts
and satisfiability problems using semidefinite programming.
{\em Journal of the ACM} {\bf 42}:1115--1145, 1995.

\bibitem{GPT} 
J.  Gouveia, P.A. Parrilo, and R. Thomas.  \newblock
Theta bodies for polynomial ideals, preprint,
\texttt{arXiv:0809.3480}, 2008.

\bibitem{GT89b}
M. Gr\"otschel and K. Truemper.
Master polytopes for cycles of binary matroids.
{\em Linear Algebra and its Applications}, 114/114:523--540, 1989.

\bibitem{GT89}
M. Gr\"otschel and K. Truemper.
Decomposition and optimization over cycles in binary matroids.
{\em Journal of Combinatotorial Theory B}, 46(3):306--337, 1989.


\bibitem{Las01b}
J.B. Lasserre.
An explicit exact SDP relaxation for nonlinear $0-1$ programs.
In K. Aardal and A.M.H. Gerards, eds.,
{\em Lecture Notes in Computer Science} {\bf 2081}:293--303, 2001.

\bibitem{Lau03a}
M. Laurent.
A comparison of the Sherali-Adams, Lov\'asz-Schrijver and Lasserre
relaxations for 0-1 programming.
{\em Mathematics of Operations Research} {\bf 28(3)}:470--496, 2003.

\bibitem{Lau03b} M. Laurent.  Lower bound for the number of iterations
  in semidefinite relaxations for the cut polytope.  {\em Mathematics
    of Operations Research,} {\bf 28(4)}:871--883, 2003.

\bibitem{LaurentMaxCut} M. Laurent.  \newblock Semidefinite
  relaxations for max-cut.  \newblock In {\em The sharpest cut},
  MPS/SIAM Ser. Optim., pages 257--290.  SIAM, Philadelphia, PA, 2004.

\bibitem{Lau07a}
M. Laurent.
Semidefinite representations for finite varieties.
{\em Mathematical Programming} {\bf 109}:1--26, 2007.

\bibitem{LR05} M. Laurent and F. Rendl.  Semidefinite Programming and
  Integer Programming.  In {\em Handbook on Discrete Optimization},
  K. Aardal, G. Nemhauser, R. Weismantel (eds.), pp. 393-514, Elsevier
  B.V., 2005.

\bibitem{Lo79} L. Lov\'asz.  On the Shannon capacity of a graph.  {\em
    IEEE Transactions on Information Theory} {\bf IT-25}:1--7, 1979.


\bibitem{Lovasz} L. Lov\'asz.  \newblock Semidefinite programs and
  combinatorial optimization.  \newblock In {\em Recent advances in
    algorithms and combinatorics}, volume~11 of {\em CMS Books
    Math./Ouvrages Math. SMC}, pages 137--194. Springer, New York,
  2003.

\bibitem{LS91}
L. Lov\'asz and A. Schrijver.
Cones of matrices and set-functions and
$0-1$ optimization.
{\em SIAM Journal on Optimization} {\bf 1}:166--190, 1991.

\bibitem{Oxley}
J.G. Oxley.
{\em Matroid Theory}.
Oxford University Press, Oxford, 1992.


\bibitem{Parrilo:spr}
P.A. Parrilo.
\newblock Semidefinite programming relaxations for semialgebraic problems.
\newblock {\em Math. Prog.}, 96(2, Ser. B):293--320, 2003.


\bibitem{SA90} H.D. Sherali and W.P. Adams.  A hierarchy of
  relaxations between the continuous and convex hull representations
  for zero-one programming problems.  {\em SIAM Journal on Discrete
    Mathematics} {\bf 3}:411--430, 1990.

\bibitem{Sullivant}
S. Sullivant.
\newblock Compressed polytopes and statistical disclosure limitation.
\newblock {\em Tohoku Math. J. (2)}, 58(3):433--445, 2006.

\bibitem{VB} L. Vandenberghe and S. Boyd.  Semidefinite Programming.
  {\em SIAM Review} {\bf 38(1)}:49--95, 1996.

\bibitem{W}
A. Wiegele.
{\em Nonlinear optimization techniques applied to combinatorial optimization
problems.} PhD thesis. Alpen-Adria-Universit\"at Klagenfurt, 2006.


\end{thebibliography}
\end{document}